\documentclass[11pt]{amsart}
\usepackage[T1]{fontenc}
\usepackage[utf8]{inputenc} 
\usepackage[
  backend=biber,
  style=numeric,
  sorting=none,
  giveninits=false,
  doi=true,
  isbn=true,
  url=true,
  eprint=true
]{biblatex}

\usepackage{lmodern}
\usepackage{microtype}

\usepackage{amsmath, amssymb, amsfonts, amsthm}
\usepackage{mathtools}
\usepackage{enumitem}
\usepackage{hyperref}
\usepackage{cleveref}
\usepackage{tikz-cd}
\usepackage{geometry}
\usepackage{thmtools}
\usepackage{float}

\hypersetup{
  colorlinks=true,
  linkcolor=blue,
  citecolor=blue,
  urlcolor=blue,
  pdftitle={Perverse Schober Models for Conifold Degenerations},
  pdfauthor={Abdul Rahman}
}
\numberwithin{equation}{section}
\theoremstyle{plain}
\newtheorem{theorem}{Theorem}[section]
\newtheorem{proposition}[theorem]{Proposition}
\newtheorem{lemma}[theorem]{Lemma}

\theoremstyle{definition}
\newtheorem{definition}[theorem]{Definition}

\theoremstyle{remark}
\newtheorem{remark}{Remark}[section]

\newcommand{\RHom}{\mathbf{R}\!\operatorname{Hom}}

\newcommand{\Perv}{\mathrm{Perv}}

\newcommand{\Ext}{\mathrm{Ext}}
\newcommand{\IC}{\mathrm{IC}}
\newcommand{\can}{\mathrm{can}}
\newcommand{\var}{\mathrm{var}}
\newcommand{\coker}{\mathrm{coker}}

\DeclareMathOperator{\Cone}{Cone}

\title[Perverse Schober Structures for Conifold Degenerations]{Perverse Schober Structures for Conifold Degenerations}

\author{Abdul Rahman }
\thanks{Email: arahman@alum.howard.edu}
\subjclass[2020]{14D06, 32S30, 18G80} 
\keywords{conifold degeneration, perverse sheaves, Picard--Lefschetz theory, spherical twists} 

\begin{document}

\begin{abstract}
We study a one-parameter degeneration of Calabi–Yau threefolds whose central fiber contains a single ordinary double point. Using the nearby and vanishing cycle formalism, we construct a canonical perverse object on the singular fiber from the variation morphism between vanishing and nearby cycles. We show that this object restricts to the constant perverse sheaf on the smooth locus and differs from the intersection complex by a single rank-one contribution supported at the node. Thus the object isolates the vanishing-cycle contribution associated with the conifold degeneration in a canonical sheaf-theoretic form. We also explain how this construction aligns with the rank-one Picard–Lefschetz phenomenon that appears categorically through spherical monodromy, making it a natural comparison object for the decategorified effect of spherical twists in the ordinary double point case.
\end{abstract}

\maketitle

\tableofcontents

\section{Introduction}

Degenerations of Calabi--Yau threefolds occupy a central position at the interface of topology, Hodge theory, and categorical structures arising in mirror symmetry. Among these, conifold degenerations provide the simplest and most structurally transparent model of topology change. An ordinary double point (ODP) singularity produces a vanishing $3$--sphere in the smoothing and modifies the middle-dimensional topology in a controlled and computable fashion
\cite{MilnorSingularPoints,DimcaSheaves,Clemens}. Globally, such transitions interpolate between Calabi--Yau manifolds with distinct Hodge numbers and play a foundational role in Reid's web of Calabi--Yau threefolds \cite{Clemens,StromingerConifold}.

Conifold degenerations also play a fundamental role in string theory and mirror symmetry. In the seminal work of Strominger
\cite{StromingerConifold}, conifold transitions were interpreted
physically as phase transitions in which certain BPS states become
massless when a three-cycle in the Calabi--Yau manifold collapses.
Geometrically, the transition is governed by the vanishing of a
Lagrangian $S^3$ and the associated monodromy acting on the middle
cohomology of the smooth fiber.

The classical description of this monodromy is given by the
Picard--Lefschetz formula. Let
\[
\pi : \mathcal{X} \to \Delta
\]
be a one--parameter degeneration over a small disk
\[
\Delta := \{\, t \in \mathbb{C} \mid |t| < \varepsilon \,\},
\]
and assume that $\mathcal X$ has pure complex dimension $4$, that the
fibers
\[
X_t := \pi^{-1}(t)
\]
are smooth compact Calabi--Yau threefolds for $t\neq0$, and that the
central fiber
\[
X_0 := \pi^{-1}(0)
\]
contains a single isolated ordinary double point $p\in X_0$.  Associated
with this singularity is a vanishing three--sphere in the Milnor fiber
whose homology class we denote
\[
\delta \in H_3(X_t,\mathbb Z).
\]
Parallel transport around a small loop encircling $t=0$ induces the
monodromy operator
\[
T: H^3(X_t,\mathbb Z)\longrightarrow H^3(X_t,\mathbb Z),
\]
which acts by the Picard--Lefschetz reflection
\[
T(\alpha)
=
\alpha + (\alpha\cdot\delta)\,\delta .
\]
This transformation is a rank--one unipotent operator determined by the
vanishing sphere and governs the topology of conifold transitions
\cite{MilnorSingularPoints,Clemens}.

From the perspective of homological mirror symmetry,
Picard--Lefschetz transformations admit a categorical realization. In
the derived category of coherent sheaves on the smooth fiber, the
vanishing cycle is mirrored by a spherical object whose associated
spherical twist induces a reflection on additive invariants such as the
Grothendieck group \cite{SeidelThomas}. More generally, Kapranov and
Schechtman introduced the notion of a \emph{perverse schober} as a
categorification of a perverse sheaf, providing a framework in which
categorical monodromy over a disk is governed by spherical functors and
their twists \cite{KapranovSchechtman}. In this setting the resulting perverse object reflects the same rank-one Picard–Lefschetz transformation that appears categorically through spherical twists. 

On the topological and Hodge--theoretic side, degenerations are governed
by the nearby and vanishing cycle formalism. The vanishing cycle
contributes a rank--one variation in middle cohomology, and the
resulting monodromy operator is unipotent of index two. At the level of
Hodge theory this produces a limiting mixed Hodge structure whose weight
filtration is explicitly computable
\cite{Schmid,SaitoMHM}. The ordinary double point therefore provides an
ideal model for understanding how degeneration data is organized
simultaneously at the topological, Hodge--theoretic, and categorical
levels.

Sheaf-theoretically, singular spaces admit canonical extension objects via intersection homology and the intersection complex. Let
\[
D^b_c(X_0)
\]
denote the bounded derived category of constructible
$\mathbb Q$--sheaf complexes on $X_0$. This category carries Verdier
duality
\[
\mathbb D : D^b_c(X_0)\longrightarrow D^b_c(X_0),
\qquad
\mathbb D^2 \cong \mathrm{Id},
\]
compatible with the perverse $t$--structure \cite{BBD,KS}. The
intersection complex $\IC_{X_0}$ provides the canonical
perverse extension of the constant sheaf across the singular
point.

A closely related but distinct sheaf-theoretic construction was
developed by Banagl, Budur, and Maxim, who introduced the
intersection-space complex as a perverse-sheaf model for
intersection-space cohomology and established mixed-Hodge and
self-duality properties under suitable hypotheses
\cite{BanaglBudurMaxim}. The present paper has a different emphasis.
Rather than recovering intersection-space cohomology, we isolate a
canonical perverse object in the ordinary double point setting directly
from the nearby/vanishing-cycle formalism and interpret it as a natural
sheaf-theoretic comparison object for rank--one categorical monodromy.

These developments suggest a natural structural question:
\begin{quote}
\textit{Given a geometric degeneration governed by vanishing cycles, is there a canonical perverse object on the singular fiber whose invariants reflect the same rank--one monodromy phenomenon that appears categorically
through spherical twists?}
\end{quote}

The goal of this paper is to identify such an object in the simplest
nontrivial case of an ordinary double point degeneration of
Calabi--Yau threefolds. Using the nearby and vanishing cycle
distinguished triangle, we construct a canonical perverse object
\[
\mathcal P \in \Perv(X_0),
\]
where \(\Perv(X_0)\) denotes the heart of the perverse \(t\)-structure
on \(D^b_c(X_0)\) \cite{BBD}. We show that \(\mathcal P\) restricts to
the constant perverse sheaf on the smooth locus and differs from the
intersection complex by a single rank--one contribution supported at the
node. Equivalently, \(\mathcal P\) fits into a short exact sequence
\[
0 \longrightarrow \IC_{X_0}
\longrightarrow \mathcal P
\longrightarrow i_*\mathbb Q_{\{p\}}
\longrightarrow 0 .
\]

We then explain how this object naturally reflects the same rank--one
phenomenon that appears categorically through spherical monodromy. In
this sense, \(\mathcal P\) reflects the same rank-one Picard–Lefschetz phenomenon that appears categorically through spherical twists.

We emphasize that this work does not introduce a new cohomology theory
and does not replace intersection homology or intersection-space
constructions \cite{BanaglIS,BanaglMaximDeformation}. Rather, it
identifies the perverse object singled out functorially by the
nearby/vanishing-cycle formalism and positions it as the natural
sheaf-theoretic object attached to the rank--one vanishing contribution
of the conifold degeneration.

Recent constructions of perverse schobers in mirror symmetry reinforce
this viewpoint. For example, Koseki and Ouchi construct schobers on the
Riemann sphere arising from Calabi--Yau hypersurfaces using Orlov
equivalences and graded matrix factorizations to categorify intersection
complexes associated with mirror monodromy representations
\cite{KosekiOuchi}. Their work provides concrete evidence that
degeneration phenomena in Calabi--Yau geometry admit categorical lifts
whose decategorification recovers classical invariants.

The ordinary double point provides a setting in which topology, Hodge
theory, and categorical monodromy can be compared explicitly. In this
case the vanishing cycle is rank one, the monodromy is unipotent of
index two, and the corresponding spherical twist is generated by a
single spherical object. This allows the classical and categorical
pictures to be related in a particularly transparent way.

The main contributions of this paper are:
\begin{enumerate}[label=(\arabic*)]
\item We construct a canonical perverse object
\(\mathcal P\in\Perv(X_0)\) attached functorially to a conifold
degeneration via nearby and vanishing cycles.
\item We show that \(\mathcal P\) restricts to the constant perverse
sheaf on the smooth locus and differs from the intersection complex by a single rank--one point-supported summand at the node.
\item We explain how \(\mathcal P\) reflects the same rank-one Picard–Lefschetz phenomenon that appears categorically through spherical twists.
\end{enumerate}

Finally, we indicate how the same framework extends to degenerations
with multiple nodes and more general singular strata, and we discuss
possible refinements in the direction of mixed Hodge modules and
Hodge-enhanced categorical structures.

\subsection{Main results}
\label{sec:intro-results}

Let \(\pi:\mathcal X\to\Delta\) be a one--parameter degeneration of
Calabi--Yau threefolds whose central fiber \(X_0\) has a single
ordinary double point. Let \(\psi_\pi\) and \(\phi_\pi\) denote the
nearby and vanishing cycle functors associated with \(\pi\). Our first main result identifies a canonical perverse object encoding the local vanishing contribution of the degeneration.

\begin{theorem}[Canonical perverse model]\label{thm:main-perverse}
Let $\pi:\mathcal X\to\Delta$ be a one-parameter degeneration of
Calabi--Yau threefolds whose central fiber $X_0$ has a single
ordinary double point $p$. Let
\[
F:=\mathbb Q_{\mathcal X}[3].
\]
Assume that $\var_F:\phi_\pi(F)\to\psi_\pi(F)$ is a monomorphism in
$\Perv(X_0)$. Then the object
\[
\mathcal P:=\Cone(\var_F)[-1]
\]
lies in $\Perv(X_0)$ and satisfies:

\begin{enumerate}[label=(\roman*)]
\item $j^*\mathcal P\cong \mathbb Q_U[3]$ on the smooth locus $U:=X_0\setminus\{p\}$;
\item there is a short exact sequence
\[
0\to \IC_{X_0}\to \mathcal P\to i_*\mathbb Q_{\{p\}}\to 0;
\]
\item $\dim_{\mathbb Q}{}^pH^0(i^*\mathcal P)=1$.
\end{enumerate}
\end{theorem}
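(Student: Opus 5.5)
The plan is to work entirely in $\Perv(X_0)$, using the standard perverse normalization under which $\psi_\pi(F)$ and $\phi_\pi(F)$ are perverse on $X_0$. The first point to settle is the shift. For a monomorphism $f:A\to B$ in an abelian heart, the cone $\Cone(f)$ is concentrated in perverse degree $0$ and equals $\coker f$; the desuspension $\Cone(f)[-1]$ would instead sit in perverse degree $1$. So I will identify $\mathcal P$ with $\coker(\var_F)$, which is what the statement intends, and note that under the paper's shift convention for $\psi_\pi,\phi_\pi$ this is what $\Cone(\var_F)[-1]$ means. With this reading, perversity of $\mathcal P$ is immediate, and everything below is about the short exact sequence
\[
0\to \phi_\pi(F)\xrightarrow{\var_F}\psi_\pi(F)\to \mathcal P\to 0 .
\]

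For (i): $\pi$ is smooth away from $p$, so $\phi_\pi(F)$ is supported on $\{p\}$. By the Milnor fibration of an ordinary double point (Milnor number $1$) we have $\phi_\pi(F)\cong i_*\mathbb Q_{\{p\}}$. In the same way $j^*\psi_\pi(F)\cong \mathbb Q_U[3]$. Restricting the sequence by the exact functor $j^*$ then gives $j^*\mathcal P\cong j^*\psi_\pi(F)\cong\mathbb Q_U[3]$.

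For (ii): both objects in the exact sequence below are perverse extensions of $\mathbb Q_U[3]$. Since $\IC_{X_0}=j_{!*}\mathbb Q_U[3]$ has no nonzero sub- or quotient objects supported at $p$, the canonical adjunction maps give $\IC_{X_0}\subset \mathcal P$ as the minimal extension inside $\mathcal P$, with cokernel supported at $p$. This holds as long as $\mathcal P$ has no nonzero subobject supported on $\{p\}$, i.e. ${}^pH^0(i^!\mathcal P)=0$.

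I will get both the vanishing of ${}^pH^0(i^!\mathcal P)$ and the rank of the cokernel from an explicit stalk computation at $p$. The Milnor fiber of the node is homotopic to $S^3$, so the stalk of $\psi_\pi(F)$ at $p$ has cohomology $\mathbb Q$ in degrees $-3$ and $0$. Injectivity of $\var_F$ means it hits the degree-$0$ class isomorphically. I will then compare the resulting stalk and costalk of $\mathcal P$ with those of $\IC_{X_0}$, which are computed from the link $S^2\times S^3$ via the truncation $\tau_{\le -1}Rj_*\mathbb Q_U[3]$. The difference is a single copy of $\mathbb Q$ in degree $0$. An Euler characteristic count of stalks at $p$ then forces the cokernel to be $i_*\mathbb Q_{\{p\}}$ of rank exactly one.

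For (iii): apply $i^*$ to the sequence of (ii). Since $i^*$ is right $t$-exact, ${}^pH^0 i^*$ is right exact and gives an exact sequence
\[
{}^pH^0(i^*\IC_{X_0})\to {}^pH^0(i^*\mathcal P)\to \mathbb Q\to 0 .
\]
The left term vanishes because $\IC_{X_0}$ has no nonzero quotient supported at $p$. Hence ${}^pH^0(i^*\mathcal P)\cong\mathbb Q$ has dimension $1$.

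The main obstacle is step (ii). The shift and normalization conventions must be kept consistent, and the stalk and costalk of $\psi_\pi(F)$ at $p$ must be matched with those of $\IC_{X_0}$ so that the quotient is exactly one-dimensional rather than just supported at $p$. My first approach is the explicit local model $x_1^2+\dots+x_4^2=t$ together with Beilinson/Verdier gluing data $(\psi,\phi,\can,\var)$. I would fall back on the Euler characteristic count only to pin down the rank.
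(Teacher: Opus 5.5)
Your reading of $\mathcal P$ as $\coker(\var_F)$ is correct: for a monomorphism in the heart, the cone itself is the cokernel. You also correctly isolate ${}^pH^0(i^!\mathcal P)=0$ as the condition needed to embed $\IC_{X_0}$ in $\mathcal P$ with point-supported cokernel. The paper's recollement lemma asserts that embedding for every perverse extension of $\mathbb Q_U[3]$ and takes the rank from the Milnor fiber. You instead make the condition explicit and count rank by Euler characteristic.

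\textbf{The gap.} The problem is how you propose to verify that condition. The claim that injectivity of $\var_F$ ``means it hits the degree-$0$ class'' of the stalk of $\psi_\pi(F)$ at $p$ isomorphically is false.
\begin{itemize}
\item A morphism $i_*\mathbb Q_{\{p\}}\to\psi_\pi(F)$ is a class in the \emph{costalk} $H^0(i_p^!\psi_\pi F)\cong\mathbb Q$. It acts on stalks through the natural map $H^0(i_p^!\psi_\pi F)\to H^0(i_p^*\psi_\pi F)$.
\item That map is zero here. The connecting map $H^{-1}(i_p^*Rj_*\mathbb Q_U[3])\cong H^2(S^2\times S^3)\cong\mathbb Q\to H^0(i_p^!\psi_\pi F)$ is injective, because $H^{-1}(i_p^*\psi_\pi F)=0$, hence onto.
\item Equivalently, with the standard $\can:\psi_\pi F\to\phi_\pi F$: $\can$ is an isomorphism on degree-$0$ stalks, while $\var\circ\can=T-\mathrm{id}$ kills $H^3$ of the Milnor fiber. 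The local monodromy of a node in four variables is trivial, since $\delta\cdot\delta=0$.
\end{itemize}
So $\var_F$ is a monomorphism in $\Perv(X_0)$ but is zero on stalk cohomology at $p$.

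Taken literally, your premise gives $\mathcal P_p\simeq\mathbb Q[3]$, which is the stalk of $\mathbb Q_{X_0}[3]$. That object is a perverse extension of $\mathbb Q_U[3]$ which contains $i_*\mathbb Q_{\{p\}}$ as a \emph{sub}object, does not contain $\IC_{X_0}$, and has ${}^pH^0(i^*)=0$, contradicting (iii). Your stated outcome (a single extra $\mathbb Q$ in degree $0$) is correct, but it does not follow from that premise. In any case, ${}^pH^0(i^!\mathcal P)=0$ is a costalk statement and should be checked there.

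\textbf{The repair.} Run the first half of the computation on costalks.
\begin{itemize}
\item We have $\mathrm{Hom}(i_*\mathbb Q_{\{p\}},-)=H^0(i_p^!(-))$, and $H^0(i_p^!\psi_\pi F)\cong\mathbb Q$ (by Verdier self-duality of $\psi_\pi F$, or by the triangle $i_p^!\to i_p^*\to i_p^*Rj_*j^*$ as above). Hence the monomorphism $\var_F$ induces an isomorphism $H^0(i_p^!\phi_\pi F)\to H^0(i_p^!\psi_\pi F)$.
\item Together with $H^1(i_p^!\phi_\pi F)=0$, this gives $H^0(i_p^!\mathcal P)=0$, i.e.\ $\mathcal P$ has no subobject supported at $p$.
\item Your Euler-characteristic count then goes through unchanged, since it does not see what $\var_F$ does on stalks: $\chi((\psi_\pi F)_p)-\chi((\phi_\pi F)_p)=0-1=-1$ is $\chi(\mathcal P_p)$, and $\chi((\IC_{X_0})_p)=-2$, so $\dim V=1$.
\end{itemize}
The true stalk of $\mathcal P$ at $p$ is $\mathbb Q$ in degrees $-3,-1,0$; the degree $-1$ class is the kernel of $\var_F$ on stalks.

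\textbf{A shorter route.} Let $\iota:X_0\hookrightarrow\mathcal X$. The standard triangle $\phi_\pi F\xrightarrow{\var_F}\psi_\pi F\to\iota^!(\mathbb Q_{\mathcal X}[4])[1]\xrightarrow{+1}$ identifies $\coker(\var_F)\cong\mathbb D(\mathbb Q_{X_0}[3])$. It also shows the monomorphism hypothesis is automatic. Dualizing $0\to i_*\mathbb Q_{\{p\}}\to\mathbb Q_{X_0}[3]\to\IC_{X_0}\to0$, whose kernel comes from $H^2$ of the link, gives (ii) directly.

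Your deduction of (iii) from (ii), via right $t$-exactness of $i^*$, is fine as written.
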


\begin{remark}
In the ordinary double point case, the object \(\mathcal P\) is defined by
\[
\mathcal P := \Cone(\var_F)[-1],
\qquad
F:=\mathbb Q_{\mathcal X}[3],
\]
where \(\var_F:\phi_\pi(F)\to\psi_\pi(F)\) is the variation morphism. The variation morphism arises from the distinguished triangle relating nearby and vanishing cycles in $D^b_c(X_0)$ \cite{DimcaSheaves}.
The shifted cone lies in the abelian category \(\Perv(X_0)\) because
\(\var_F\) is a monomorphism in the perverse heart. In particular, this isolates precisely the rank-one vanishing contribution associated with the ordinary double point in a canonical way and therefore provides the natural perverse object controlling the Picard–Lefschetz sector of the degeneration.
\end{remark}
Our second main result concerns the categorical interpretation of this
construction.

\begin{proposition}[Categorical comparison, conditional form]
\label{prop:main-decategorification}
Assume the categorical monodromy of the degeneration is governed by a
spherical object $S\in D^b(X_t)$, and assume there is an additive realization
\[
\rho:K_0(D^b(X_t))\to H^3(X_t,\mathbb Q)
\]
sending $[S]$ to the vanishing cycle and intertwining the spherical twist with the Picard--Lefschetz monodromy operator. Then the perverse sheaf $\mathcal P$ provides a natural sheaf-theoretic comparison object for the corresponding rank-one decategorified monodromy phenomenon.
\end{proposition}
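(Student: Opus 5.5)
The statement is conditional and partly interpretive, so the plan is to make ``natural comparison object'' precise as a concrete identity and then verify that identity from Theorem~\ref{thm:main-perverse} and the stated hypotheses on $\rho$. Concretely, the target is the following. The rank-one subspace spanned by $\rho([S])$ should coincide with the image of the point-supported quotient $i_*\mathbb Q_{\{p\}}$ of $\mathcal P$ under the natural comparison between stalk data at $p$ and $H^3(X_t,\mathbb Q)$. Moreover, $T-\mathrm{Id}$ should factor through that one-dimensional space exactly as $\rho\circ(T_S-\mathrm{Id})$ does.

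\textbf{Step 1: the decategorified twist.} Write $T_S$ for the spherical twist. From the triangle $\RHom(S,E)\otimes S\to E\to T_S(E)$ one gets
\[
[T_S(E)]=[E]-\chi(S,E)[S]
\]
in $K_0$. Applying $\rho$ and the intertwining hypothesis gives
\[
\rho(T_S-\mathrm{Id})=(T-\mathrm{Id})\rho .
\]
Here the image of $T-\mathrm{Id}$ is $\mathbb Q\,\delta=\mathbb Q\,\rho([S])$, by the Picard--Lefschetz formula recalled in the introduction. The upshot is that the decategorified monodromy is a rank-one unipotent operator whose defect space is spanned by $\rho([S])$.

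\textbf{Step 2: the sheaf-theoretic side.} Take hypercohomology of the short exact sequence of Theorem~\ref{thm:main-perverse}(ii). This gives a long exact sequence
\[
\cdots\to \mathbb H^{k}(X_0,\IC_{X_0})\to \mathbb H^{k}(X_0,\mathcal P)\to \mathbb H^{k}(i_*\mathbb Q_{\{p\}})\to\cdots
\]
The only contribution of the point-supported quotient is $\mathbb Q$ in degree $0$, which is dimension one by (iii). Separately, applying hypercohomology to the triangle $\mathcal P\to\phi_\pi(F)\xrightarrow{\var_F}\psi_\pi(F)$ identifies this one-dimensional piece with the image of $\var$ on $\mathbb H^0(\phi_\pi F)\cong\mathbb Q$, the vanishing cycle of the node. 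The middle hypercohomology of $\psi_\pi(F)$ is $H^3(X_t,\mathbb Q)$. Since $T-\mathrm{Id}=\var\circ\can$ on nearby cycles, the image of $T-\mathrm{Id}$ is $\mathbb Q\,\delta$, which is exactly what $\mathcal P$ isolates at $p$.

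\textbf{Step 3: matching and main obstacle.} Combining Steps 1 and 2 under $\rho$ shows that $\rho([S])$ and the class coming from the node summand of $\mathcal P$ span the same line in $H^3(X_t,\mathbb Q)$. Hence both the categorical and the sheaf-theoretic rank-one defects of monodromy are computed by the same datum. That is the precise content of ``natural comparison object.''

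The main obstacle is Step 2: pinning down sign and shift conventions so that $\var\circ\can=T-\mathrm{Id}$ and the identification $\mathbb H^0(\phi_\pi F)\cong\mathbb Q\,\delta$ are compatible, and so that the line produced by $\mathcal P$ is really the image of $\var$ rather than a dual or cokernel. I would handle this first by a local computation on the Milnor fiber of $x_1^2+\dots+x_4^2=t$, where everything is explicit, and then globalize using that $\phi_\pi F$ is supported at $p$. Step 3 then needs no further argument beyond transporting the identity along $\rho$, which is a hypothesis.
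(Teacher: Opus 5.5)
Your Step~1 and the overall shape of the argument ($K_0$ reflection, then the extension (ii)) match the paper's. The concrete identity you commit to in Step~2 is false, however, and not for sign or shift reasons. By Proposition~\ref{prop:P-perverse}, $\mathcal P$ is the \emph{cokernel} of $\var_F$: $0\to\phi_\pi(F)\xrightarrow{\var_F}\psi_\pi(F)\to\mathcal P\to0$. On hypercohomology, the image of $\var$ on $\mathbb H^0(X_0,\phi_\pi F)\cong\mathbb Q$ is $\mathbb Q\delta$; this is forced by $\var\circ\can=T-\mathrm{Id}$ and Picard--Lefschetz. The long exact sequence then gives $\mathbb H^0(X_0,\mathcal P)\cong H^3(X_t,\mathbb Q)/\mathbb Q\delta$. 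So the line $\mathbb Q\,\rho([S])=\mathbb Q\delta$ is exactly what $\mathcal P$ discards.

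The triangle you actually use, $\mathcal P\to\phi_\pi(F)\xrightarrow{\var_F}\psi_\pi(F)$, belongs to the literal $\Cone(\var_F)[-1]\cong\coker(\var_F)[-1]$. That object is not perverse, so it cannot also satisfy (ii); you are mixing two objects that differ by a shift. Even in that reading, consecutive maps in a triangle compose to zero. Hence the image of $\mathbb H^*(X_0,\mathcal P)\to\mathbb H^*(X_0,\phi_\pi F)$ lies in $\ker(\var)$, and nothing from $\mathcal P$ reaches $\mathrm{im}(\var)$. Your own caveat, ``rather than a dual or cokernel'', is precisely what happens. A local Milnor-fiber computation will confirm it rather than repair it. Consequently Step~3's claim, that $\rho([S])$ and the node summand of $\mathcal P$ span the same line in $H^3(X_t,\mathbb Q)$, fails.

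What the node quotient of $\mathcal P$ carries is the other factor of $T-\mathrm{Id}=\var\circ\can$, with the standard convention $\can:\psi_\pi\to\phi_\pi$. Here $\can\circ\var=T-\mathrm{Id}$ on $\phi_\pi(F)$ vanishes, since the monodromy on $\widetilde H^3(S^3)$ is trivial ($\delta\cdot\delta=0$). So $\var$ lands in $\ker(\can)$, and $\can$ descends to a surjection $\mathcal P\to\phi_\pi(F)\cong i_*\mathbb Q_{\{p\}}$. By (iii), $\mathrm{Hom}(\mathcal P,i_*\mathbb Q_{\{p\}})\cong H^0(i^*\mathcal P)^\vee$ is one-dimensional, so this is the quotient in (ii). On $\mathbb H^0$ it is $\alpha\mapsto\langle\alpha,\delta\rangle$, i.e.\ restriction to the vanishing sphere.

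A correct sharpening is therefore about a functional, not a line. Step~1 plus the intertwining hypothesis give $\chi(x,[S])\,\delta=\langle\rho(x),\delta\rangle\,\delta$, using the paper's sign convention and $\chi(S,E)=-\chi(E,S)$ on a Calabi--Yau threefold. So for $\delta\neq0$ the composite $K_0(D^b(X_t))\xrightarrow{\rho}H^3(X_t,\mathbb Q)\to\mathbb H^0(X_0,\mathcal P)\to\mathbb H^0(X_0,i_*\mathbb Q_{\{p\}})\cong\mathbb Q$ equals $\chi(-,[S])$, up to normalization.

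The paper's proof never makes a precise identification; it only juxtaposes the $K_0$ reflection with (ii), and so it avoids your error. Its remark that $\mathbb H^*(X_0,\mathcal P)$ ``contains the vanishing contribution'' is open to the same objection, however. So is the map $\iota$ of Proposition~\ref{prop:decategorified-intertwining}: on $\mathbb H^0$ it is really the quotient by $\mathbb Q\delta$, and it sends $\rho([S])$ to $0$.
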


Taken together, these results exhibit \(\mathcal P\) as the canonical
perverse object naturally attached to the ordinary double point
degeneration and explain why it is the correct sheaf-theoretic shadow of the corresponding rank--one categorical monodromy.

\section{Related work and context}
\label{sec:related}

The present paper lies at the intersection of three well-developed
strands of the literature:
\begin{enumerate}[label=(\roman*)]
\item sheaf-theoretic vanishing-cycle formalisms for degenerations,
\item perverse models motivated by conifold
transitions in complex dimension three,
\item categorical Picard--Lefschetz theory and its relation to
spherical twists and perverse schobers.
\end{enumerate}
We summarize below the points most relevant to the construction
introduced here.

\subsection{Perverse models in conifold settings}

In earlier work, the author constructed a perverse
sheaf in the conifold context using MacPherson--Vilonen techniques
\cite{RahmanATMP}.  That construction provides an explicit
sheaf-theoretic mechanism for enforcing duality constraints in the
presence of isolated singularities and motivates the present focus on
canonical perverse objects attached functorially to degenerations.

Related constructions appear in the intersection-space program of
Banagl, Budur, and Maxim \cite{BanaglBudurMaxim}.  For complex
projective hypersurfaces with isolated singularities, they show that
intersection-space cohomology can be realized as the hypercohomology of
a perverse sheaf and establish self-duality properties under suitable
hypotheses.  More broadly, the intersection-space program initiated by
Banagl \cite{BanaglIS} provides a homotopy-theoretic framework for
constructing duality-satisfying invariants of singular spaces via
spatial homology truncation.

A particularly relevant feature for conifold and hypersurface
degenerations is that deformation behavior in middle degree is governed
by Milnor-fiber monodromy.  Banagl and Maxim show that for complex
projective hypersurfaces with an isolated singularity,
intersection-space cohomology is stable under smooth deformation in all
degrees except possibly the middle, where the correction is governed by
local monodromy \cite{BanaglMaximDeformation}.  This provides further
evidence that vanishing-cycle and monodromy data control the
middle-dimensional correction terms in duality-compatible theories.

We emphasize, however, that the perverse object constructed in the
present paper is formally distinct from the intersection-space complex
of \cite{BanaglBudurMaxim}.  In the present work the object
\[
\mathcal P
=
\Cone\!\left(
\phi_\pi(\mathbb Q_{\mathcal X}[3])
\longrightarrow
\psi_\pi(\mathbb Q_{\mathcal X}[3])
\right)[-1]
\]
is defined directly and functorially from the
nearby/vanishing-cycle triangle.  The emphasis of the present paper is different: rather than realizing a specific cohomology theory, we isolate the perverse object that arises canonically from the nearby/vanishing cycle triangle and
analyze its extension structure in the ordinary double point case.

\subsection{Categorical Picard--Lefschetz theory and spherical monodromy}

On the categorical side, spherical twists and spherical functors provide
categorical analogues of Picard--Lefschetz monodromy.
Seidel and Thomas introduced spherical twists in derived categories and
established the braid-type structures generated by such twists
\cite{SeidelThomas}.  Anno and Logvinenko developed a framework for
spherical functors and their associated twist autoequivalences
\cite{AnnoLogvinenko}.  More broadly, Katzarkov, Pandit, and Spaide
formulate a categorical Picard--Lefschetz perspective in which monodromy
phenomena are organized by spherical functors and Calabi--Yau
structures \cite{KatzarkovPanditSpaide}.

More recently, Christ, Dyckerhoff, and Walde introduced categorical
complexes and higher-dimensional perverse schobers, providing a setting
in which Picard--Lefschetz phenomena appear at the level of stable
$\infty$-categories \cite{CDW}.

These developments motivate the basic question addressed here: how the
classical vanishing-cycle formalism of a geometric degeneration is
reflected in a natural sheaf-theoretic object compatible with the
decategorified effect of spherical monodromy.

\subsection{Perverse schobers}

Kapranov and Schechtman proposed perverse schobers as categorical
analogues of perverse sheaves, replacing vector spaces by triangulated
or stable categories and singular gluing data by exact functors
\cite{KapranovSchechtman}.  In the local disk model with one singular
point, the monodromy is governed by a spherical functor, providing a
categorical analogue of local Picard--Lefschetz behavior.

The present work does not construct a full schober.  Rather, it
identifies a canonical perverse sheaf on the singular fiber that may be
viewed as the natural sheaf-theoretic shadow of the same rank--one
monodromy phenomenon.

\subsection{Position of the present work}

We work in the model case of a one--parameter Calabi--Yau threefold
degeneration with a single ordinary double point.  Our contribution is
to isolate a canonical perverse object on the singular fiber,
constructed functorially from nearby and vanishing cycle data, whose
local and extension-theoretic structure can be described explicitly.
We then interpret this object as a natural sheaf-theoretic comparison
object for the corresponding rank--one categorical monodromy.

\begin{remark}[Canonical vs.\ rank-prescribed perverse models]
\label{rem:can-vs-rank}
There are at least two natural perverse objects that arise in conifold
settings.  The object constructed in the present paper,
\[
\mathcal P
:=
\Cone\!\big(
\phi_\pi(\mathbb Q_{\mathcal X}[3])
\to
\psi_\pi(\mathbb Q_{\mathcal X}[3])
\big)[-1],
\]
is functorially singled out by the nearby/vanishing-cycle distinguished
triangle.  By contrast, the perverse models
appearing in \cite{RahmanATMP} are obtained by imposing additional
local constraints via explicit zig--zag / MacPherson--Vilonen data
\cite{MacPhersonVilonen1986}.  Consequently, \(\mathcal P\) need not
coincide with the rank-prescribed perverse sheaf of
\cite{RahmanATMP}, even in the ordinary double point case.
\end{remark}

\begin{remark}[Relation with earlier constructions]

The use of perverse sheaves to model the cohomology of a
Calabi--Yau degeneration with an isolated node goes back to the
construction proposed in \cite{HubschRahman2002}.  That work was
motivated by the linear algebra description of perverse sheaves on a
space with a single singular stratum developed by MacPherson and
Vilonen \cite{MacPhersonVilonen1986}.  Banagl subsequently observed
that this perverse sheaf provides a natural candidate for describing
string-theoretic cohomology of conifold transitions and cited it as
the first ansatz in this direction.

The present paper revisits this construction from the perspective of
nearby and vanishing cycles and clarifies the extension structure
underlying the resulting perverse object.
\end{remark}

To the best of the author's knowledge, while the ingredients used here
are all well represented in the literature --- perverse models in conifold settings
\cite{RahmanATMP,BanaglBudurMaxim}, categorical Picard--Lefschetz
theory via spherical twists
\cite{SeidelThomas,AnnoLogvinenko,KatzarkovPanditSpaide,CDW}, and
schober-theoretic interpretations of perverse phenomena
\cite{KapranovSchechtman} --- the specific construction presented
here, namely a perverse object obtained directly from the
nearby/vanishing-cycle triangle and interpreted as a natural
sheaf-theoretic comparison object for rank--one spherical monodromy in
the ordinary double point setting, does not appear to be recorded in
the literature in this form.

\section{Conifold degenerations, nearby and vanishing cycles}
\label{sec:conifold}

In this section we describe the local geometry of a conifold
degeneration and reformulate the associated Picard--Lefschetz
phenomena in terms of nearby and vanishing cycle functors.
These functors provide a sheaf--theoretic framework for encoding
the topology of the degeneration.

Throughout we work in the bounded derived category of
constructible $\mathbb Q$--sheaves $D^b_c(\mathcal X)$.

\subsection{Geometric setup}

Let
\[
\pi:\mathcal X\to\Delta
\]
be a one--parameter smoothing of a Calabi--Yau threefold with a
single ordinary double point.

The central fiber
\[
X_0=\pi^{-1}(0)
\]
is a singular Calabi--Yau threefold, while for $t\neq0$ the fibers
\[
X_t=\pi^{-1}(t)
\]
are smooth. Locally near the singular point $p\in X_0$ the degeneration is
analytically equivalent to
\[
x_1^2+x_2^2+x_3^2+x_4^2=t
\subset\mathbb C^4\times\Delta .
\]
Hence, the total space $\mathcal X$ has complex dimension $4$,
each fiber $X_t$ has complex dimension $3$.

\begin{remark}[Dimension of the total space]
Although the Calabi--Yau variety $X_0$ has complex dimension $3$,
the total space of the degeneration has dimension $4$.
This situation is standard in the theory of degenerations and
limiting mixed Hodge structures
\cite{Schmid,SteenbrinkLimits,SaitoMHM}.
\end{remark}

\subsection{Milnor fiber and vanishing cohomology}

Let
\[
f(x)=x_1^2+x_2^2+x_3^2+x_4^2 .
\]
For $0<|\delta|\ll1$ the Milnor fiber is
\[
F=f^{-1}(\delta)\cap B_\varepsilon(0)
\]
for a sufficiently small ball $B_\varepsilon(0)$. For an ordinary double point the Milnor fiber satisfies \cite{MilnorSingularPoints,DimcaSheaves}
\[
F\simeq S^3.
\]

\begin{lemma}[Vanishing cohomology for an ODP]
\label{lem:vanishing-rank-one}

Let $X_t$ be a nearby smooth fiber.
Then

\[
H^k_{\mathrm{van}}(X_t,\mathbb Q)=0 \quad (k\neq3),
\]

and

\[
H^3_{\mathrm{van}}(X_t,\mathbb Q)\cong\mathbb Q .
\]

\end{lemma}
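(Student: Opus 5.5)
We need to interpret $H^k_{\mathrm{van}}(X_t,\mathbb Q)$. The natural choice is the reduced cohomology of the Milnor fiber, which is the same as the stalk cohomology of the vanishing cycle complex at $p$. An alternative is the cokernel and kernel of the specialization map $H^k(X_0)\to H^k(X_t)$, fitting into a long exact sequence. The plan is to use the local definition first and then show that it agrees with the global one.

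\emph{Step 1 (local computation).} We use the local model $f=x_1^2+x_2^2+x_3^2+x_4^2$ recalled above, together with the homotopy equivalence $F\simeq S^3$ of the Milnor fiber, which is standard for a nondegenerate quadratic singularity (Milnor number $\mu=1$) \cite{MilnorSingularPoints}. This gives $\tilde H^k(F,\mathbb Q)=0$ for $k\neq 3$ and $\tilde H^3(F,\mathbb Q)\cong\mathbb Q$. Indeed, $F$ is diffeomorphic to the tangent disk bundle of $S^3$, via $x=u+iv$ with $|u|^2-|v|^2=\mathrm{Re}\,\delta$ and $u\cdot v=\tfrac12\mathrm{Im}\,\delta$.

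\emph{Step 2 (sheaf-theoretic translation).} The stalk formula $\mathcal H^k(\phi_\pi\mathbb Q_{\mathcal X})_p\cong\tilde H^k(F,\mathbb Q)$ holds \cite{DimcaSheaves}. Since $p$ is the only critical point, $\phi_\pi\mathbb Q_{\mathcal X}$ is supported on $\{p\}$, so it is a skyscraper at $p$ concentrated in degree $3$ with stalk $\mathbb Q$. After the shift by $[3]$ it becomes a rank-one skyscraper in degree $0$. This is consistent with the perversity of $\phi_\pi(F)$ that the main theorem uses.

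\emph{Step 3 (global statement).} Take hypercohomology of the triangle $i^*\mathbb Q_{\mathcal X}\to\psi_\pi\mathbb Q_{\mathcal X}\to\phi_\pi\mathbb Q_{\mathcal X}\xrightarrow{+1}$ on $X_0$. Using properness of $\pi$, we have $H^k(X_0,i^*\mathbb Q)\cong H^k(X_0)$ and $H^k(X_0,\psi_\pi\mathbb Q)\cong H^k(X_t)$. We then define $H^k_{\mathrm{van}}(X_t):=H^k(X_0,\phi_\pi\mathbb Q_{\mathcal X})$. By Step 2 this equals the stalk at $p$, so it vanishes for $k\neq3$ and is $\mathbb Q$ in degree $3$. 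This yields the long exact sequence
\[
\cdots\to H^k(X_0)\to H^k(X_t)\to H^k_{\mathrm{van}}(X_t)\to H^{k+1}(X_0)\to\cdots,
\]
in which only the segment around degree $3$ is nontrivial.

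\emph{Main obstacle.} The delicate point is the convention: the vanishing cohomology must be read as hypercohomology of $\phi_\pi$, not as the image of $H^3_{\mathrm{van}}\to H^3(X_t)$. The latter may be $0$ or $\mathbb Q$, depending on whether $\delta$ is homologically trivial in $X_t$. We will state this convention explicitly. We will also justify the identification $H^*(X_0,\psi_\pi\mathbb Q)\cong H^*(X_t)$ by proper base change and the retraction of $\pi^{-1}(\Delta)$ onto $X_0$.
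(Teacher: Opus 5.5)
Your proposal is correct and matches the paper's proof, which is just the core of your Steps 1--2: identify $H^k_{\mathrm{van}}(X_t,\mathbb Q)$ with $\widetilde H^k(F,\mathbb Q)$ and use $F\simeq S^3$. Your Step 3 and the explicit convention $H^k_{\mathrm{van}}:=\mathbb H^k(X_0,\phi_\pi\mathbb Q_{\mathcal X})$ make precise what the paper leaves implicit. One caution: the identification $\mathbb H^k(X_0,\psi_\pi\mathbb Q)\cong H^k(X_t)$ requires the standard universal-cover definition of $\psi_\pi$, not the literal $i^*Rj_*j^*$ written later in the paper, which would give $H^k(\pi^{-1}(\Delta^*))$ instead.
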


\begin{proof}

The vanishing cohomology identifies with the reduced cohomology
of the Milnor fiber
\cite{MilnorSingularPoints,DimcaSheaves}:

\[
H^k_{\mathrm{van}}(X_t,\mathbb Q)
\cong
\widetilde H^k(F,\mathbb Q).
\]

Since $F\simeq S^3$ the result follows.

\end{proof}

\subsection{Nearby and vanishing cycles}

Let
\[
i:X_0\hookrightarrow\mathcal X,
\qquad
j:\mathcal X^\ast:=\pi^{-1}(\Delta^\ast)\hookrightarrow\mathcal X .
\]
The nearby cycle functor is defined by
\[
\psi_\pi(\mathcal F):=i^*Rj_*j^*\mathcal F .
\]
The vanishing cycle functor is
\[
\phi_\pi(\mathcal F)
:=
\Cone\!\big(i^*\mathcal F\to\psi_\pi(\mathcal F)\big)[-1].
\]
These fit into the distinguished triangle
\begin{equation} \label{eqn:van-func-dist-triangle}
\begin{tikzpicture}[baseline=(current bounding box.center),>=stealth]
  \node (A) at (0,0) {$\phi_\pi(\mathcal F)$};
  \node (B) at (3.6,0) {$\psi_\pi(\mathcal F)$};
  \node (C) at (1.8,-1.5) {$i^*\mathcal F$};
  \draw[->] (A) -- (B);
  \draw[->] (B) -- (C);
  \draw[->,bend left=20] (C) to node[left] {$+1$} (A);
\end{tikzpicture}
\end{equation}
in $D^b_c(X_0)$ \cite{DimcaSheaves,BBD}. 

\subsection{Canonical morphisms}

The nearby and vanishing cycle functors are equipped with
natural morphisms

\[
\var_{\mathcal F}:\phi_\pi(\mathcal F)\to\psi_\pi(\mathcal F),
\]

\[
\can_{\mathcal F}:\psi_\pi(\mathcal F)\to i^*\mathcal F .
\]
These satisfy
\[
\can_{\mathcal F}\circ\var_{\mathcal F}
=
T-\mathrm{id},
\]
where $T$ is the monodromy operator on nearby cycles \cite{DimcaSheaves}.

\subsection{Vanishing cycles for an ordinary double point}

\begin{proposition}
\label{prop:odp-vanishing}

Let $F=\mathbb Q_{\mathcal X}[3]$. Then
\[
\phi_\pi(F)\cong i_*\mathbb Q_{\{p\}}
\]
in $\Perv(X_0)$.
\end{proposition}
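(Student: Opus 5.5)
The argument has two parts. First, show that $\phi_\pi(F)$ is supported at $p$ and is concentrated in a single degree. Second, compute its rank from \Cref{lem:vanishing-rank-one}. Perversity then follows from the usual shift conventions.

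\emph{Support.} Since $\mathcal X$ is smooth of dimension $4$ near $p$ (the local model $x_1^2+\dots+x_4^2=t$ is a graph over $\mathbb C^4$), $F=\mathbb Q_{\mathcal X}[3]$ is a shifted constant sheaf on a complex manifold. Away from $p$, the map $\pi$ is a submersion by hypothesis, since all other points of $X_0$ are smooth points of the fiber. By the local triviality of $\pi$ near such points (Ehresmann in a neighborhood), the adjunction $i^*F\to\psi_\pi(F)$ is an isomorphism at every $x\in X_0\setminus\{p\}$. The stalks of $\phi_\pi(F)$ therefore vanish off $p$. Being constructible and supported on the point $\{p\}$, $\phi_\pi(F)$ is isomorphic to $i_*V^\bullet$ for a complex $V^\bullet$ of $\mathbb Q$-vector spaces, where $V^\bullet$ is the stalk at $p$.

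\emph{Stalk computation.} The stalk of $\psi_\pi(\mathbb Q_{\mathcal X})$ at $p$ is $H^\bullet(F_p,\mathbb Q)$, where $F_p\simeq S^3$ is the Milnor fiber. The stalk of $i^*\mathbb Q_{\mathcal X}$ at $p$ is $\mathbb Q$ in degree $0$. The map between them is the restriction $H^\bullet(\text{ball})\to H^\bullet(F_p)$, which is an isomorphism in degree $0$. Hence the cone, shifted by $[-1]$, has cohomology $\widetilde H^\bullet(F_p,\mathbb Q)$, which is $\mathbb Q$ in degree $3$ and $0$ otherwise. This is exactly the identification used in \Cref{lem:vanishing-rank-one}.

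\emph{Degree bookkeeping.} This is the main point to watch, because the paper's normalization is unshifted: $\phi_\pi$ is defined as $\Cone(i^*\to\psi_\pi)[-1]$, without the usual $[-1]$ that makes $\phi_\pi[-1]$ perverse-exact. After inserting the shift $[3]$ from $F$, the stalk of $\phi_\pi(F)$ at $p$ is $\mathbb Q$ in degree $3-3=0$ and vanishes in all other degrees. So $\phi_\pi(F)\cong i_*\mathbb Q_{\{p\}}$ with $\mathbb Q_{\{p\}}$ placed in degree $0$. A skyscraper in degree $0$ satisfies both the support and cosupport conditions for perversity, so it lies in $\Perv(X_0)$.

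To be safe, I would cross-check the result against Gabber's theorem: with the standard convention, ${}^p\phi_\pi:=\phi_\pi[-1]$ applied to $\mathbb Q_{\mathcal X}[4]$ is perverse. Here $\phi_\pi(\mathbb Q_{\mathcal X}[3])$ coincides with ${}^p\phi_\pi(\mathbb Q_{\mathcal X}[4])$ as defined with the paper's $\phi_\pi$, so perversity is consistent. Finally, the isomorphism is canonical only up to choosing a generator of $\widetilde H^3(F_p,\mathbb Q)$, which is the class dual to the vanishing cycle $\delta$. This is enough for an isomorphism in $\Perv(X_0)$.
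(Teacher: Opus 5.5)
Your route is the paper's own: the stalk of $\phi_\pi(F)$ at $p$ is the reduced cohomology of the Milnor fiber $F_p\simeq S^3$, followed by a shift. The explicit support argument is a useful addition. However, the degree bookkeeping, which you rightly single out as the crux, is off by one.

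Your long exact sequence shows that $C:=\Cone\bigl(i^*\mathbb Q_{\mathcal X}\to\psi_\pi\mathbb Q_{\mathcal X}\bigr)$ satisfies $H^k(C)_p\cong\widetilde H^k(F_p,\mathbb Q)$. So $\mathbb Q$ sits in degree $3$ for the \emph{unshifted} cone. Since $H^k(C[-1])=H^{k-1}(C)$, the shifted cone has $\mathbb Q$ in degree $4$. This shifted cone is the fiber of $i^*\to\psi_\pi$, computing the relative cohomology $H^k(B_\varepsilon,F_p)\cong\widetilde H^{k-1}(F_p)$. In effect, you let $[3]$ lower degrees by $3$ but let $[-1]$ leave them unchanged. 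With the definition $\phi_\pi=\Cone(i^*\to\psi_\pi)[-1]$ that you explicitly adopt, the correct count puts $\mathbb Q$ in degree $4-3=1$. That is, $\phi_\pi(F)\cong i_*\mathbb Q_{\{p\}}[-1]$, which is not perverse.

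Your Gabber cross-check double-counts the shift in the same way. The paper's normalization already contains the $[-1]$; it is not ``unshifted.'' Gabber's theorem says that $\mathcal F\mapsto\Cone(i^*\mathcal F\to\psi_\pi\mathcal F)[-1]$ is itself perverse $t$-exact. Under that normalization it would be $\phi_\pi(\mathbb Q_{\mathcal X}[4])$, not $\phi_\pi(\mathbb Q_{\mathcal X}[3])$, that is perverse.

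The statement does hold for the standard convention $\phi_\pi\mathcal F:=\Cone(i^*\mathcal F\to\psi_\pi\mathcal F)$, i.e.\ the triangle $i^*\mathcal F\to\psi_\pi\mathcal F\to\phi_\pi\mathcal F\xrightarrow{+1}$. Under it, $H^k(\phi_\pi\mathbb Q_{\mathcal X})_p\cong\widetilde H^k(F_p)$ holds on the nose. Then $\phi_\pi(\mathbb Q_{\mathcal X}[3])=\phi_\pi(\mathbb Q_{\mathcal X}[4])[-1]$ is the $t$-exact functor ${}^p\phi_\pi$ applied to the perverse sheaf $\mathbb Q_{\mathcal X}[4]$, with stalk $\mathbb Q$ in degree $0$. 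This is exactly the identification the paper's one-line proof uses without comment. Note that the paper's displayed definition with $[-1]$, and its triangle $\phi\to\psi\to i^*\xrightarrow{+1}$, do not match this convention.

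To repair your argument, state this convention explicitly instead of the $[-1]$-normalized one and redo the count. The rest then goes through: vanishing off $p$, formality of complexes of $\mathbb Q$-vector spaces, and perversity of a degree-$0$ skyscraper. Similarly, your stalk formula $(\psi_\pi\mathbb Q_{\mathcal X})_p\cong H^\bullet(F_p,\mathbb Q)$ requires the standard definition of $\psi_\pi$ via the universal cover of $\Delta^\ast$. The literal formula $i^*Rj_*j^*$ would instead give the cohomology of $B_\varepsilon\cap\mathcal X^\ast$ at $p$, not that of $F_p$.
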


\begin{proof}
The stalk cohomology of vanishing cycles is identified with the
reduced cohomology of the Milnor fiber \cite{DimcaSheaves}. For an ordinary double point, the Milnor fiber is homotopy equivalent to $S^3$, so after the shift $[3]$, there is a single nonzero perverse stalk at $p$.
Thus $\phi_\pi(F)$ is the skyscraper perverse sheaf $i_*\mathbb Q_{\{p\}}$.

\end{proof}

\subsection{Monodromy}

The nearby cycle complex carries a canonical monodromy
automorphism
\[
T:\psi_\pi(\mathcal F)\to\psi_\pi(\mathcal F)
\]
induced by analytic continuation around the punctured disk
\cite{DimcaSheaves}. For the conifold singularity the Picard--Lefschetz formula gives
\[
T(\alpha)=\alpha+\langle\alpha,\delta\rangle\,\delta .
\]
Hence $T$ is unipotent and
\[
(T-\mathrm{Id})^2=0.
\]

\section{Canonical perverse models}
\label{sec:canonical-model}

In this section we construct the canonical perverse object associated
with the conifold degeneration of Section~\ref{sec:conifold}.  The
construction uses only the nearby and vanishing cycle formalism
recalled there.  Throughout we write
\[
F:=\mathbb Q_{\mathcal X}[3].
\]

\subsection{The variation morphism and the definition of \texorpdfstring{$\mathcal P$}{P}}

By Section~\ref{sec:conifold}, the nearby and vanishing cycle functors
carry canonical morphisms
\[
\can_F:\psi_\pi(F)\longrightarrow i^*F,
\qquad
\var_F:\phi_\pi(F)\longrightarrow\psi_\pi(F),
\]
satisfying
\[
\can_F\circ \var_F = T-\mathrm{id}
\]
on nearby cycles \cite{DimcaSheaves}. For the ordinary double point, Proposition~\ref{prop:odp-vanishing} identifies
\[
\phi_\pi(F)\cong i_*\mathbb Q_{\{p\}}
\]
in $\Perv(X_0)$, and Lemma~\ref{lem:vanishing-rank-one} shows that the
vanishing cohomology is one-dimensional.

\begin{proposition}
\label{prop:var-mono}
The variation morphism
\[
\var_F:\phi_\pi(F)\longrightarrow \psi_\pi(F)
\]
is a monomorphism in the abelian category $\Perv(X_0)$.
\end{proposition}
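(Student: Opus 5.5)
Since $\phi_\pi(F)\cong i_*\mathbb Q_{\{p\}}$ is a simple object of $\Perv(X_0)$ (Proposition~\ref{prop:odp-vanishing}), I would not compute a kernel directly. Any morphism out of a simple object is either zero or a monomorphism: its kernel is a subobject of $i_*\mathbb Q_{\{p\}}$, so it is $0$ or everything. The proposition therefore reduces to showing that $\var_F\neq 0$. Before that, I would record that $\psi_\pi(F)$ and $\phi_\pi(F)$ are perverse, by the standard $t$-exactness of $\psi_\pi[-1]$ and $\phi_\pi[-1]$ together with the perversity of $F$ on the smooth $4$-fold $\mathcal X$. With the paper's normalization of $F=\mathbb Q_{\mathcal X}[3]$ this is a shift bookkeeping step, so that $\var_F$ is a morphism in the heart.

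To show $\var_F\neq0$ I would use the relation $\can_F\circ\var_F=T-\mathrm{id}$ recalled above. If $\var_F=0$, then $T-\mathrm{id}=0$ on the relevant nearby-cycle data. I would pass to stalks at $p$, or equivalently to hypercohomology near $p$. There $H^*(\psi_\pi(F))_p\cong H^*(\text{Milnor fiber})$, and the local monodromy on $H^3$ of the Milnor fiber $\simeq S^3$ is given by the Picard--Lefschetz formula $T(\alpha)=\alpha+\langle\alpha,\delta\rangle\delta$. In complex dimension $3$ the self-intersection $\langle\delta,\delta\rangle$ is $0$, so local $T$ on the Milnor fiber is trivial. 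This means the test must instead be global: I would take hypercohomology on $X_0$, where $\mathbb H^*(X_0,\psi_\pi F)\cong H^*(X_t)$. Assuming the vanishing class $\delta$ is nonzero in $H_3(X_t)$ (the standard situation, and implicit in the paper's Picard--Lefschetz setup), there is $\alpha$ with $\langle\alpha,\delta\rangle\neq0$, so $T\neq\mathrm{id}$ globally. Hence $T-\mathrm{id}\neq0$, and so $\var_F\neq0$.

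An alternative, more local route avoids global hypotheses. Use the local variation isomorphism $\mathrm{Var}:H^3_c(F_{\mathrm{Mil}})\to H^3(F_{\mathrm{Mil}})$ for an isolated singularity. It is the composite of the boundary map $H^3(F,\partial F)\to H^3(F)$ and is classically an isomorphism for the $A_1$ singularity in odd fiber dimension (Milnor). On the stalk at $p$, $\var_F$ induces exactly this map on degree-$3$ cohomology, so it is nonzero on stalks and hence nonzero as a morphism.

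I expect the main obstacle to be the identification of the sheaf-level $\var_F$ with the classical topological variation on stalks, including the signs and shifts for $F=\mathbb Q_{\mathcal X}[3]$. Once $\var_F\neq0$ is established, simplicity of $i_*\mathbb Q_{\{p\}}$ finishes the argument immediately.
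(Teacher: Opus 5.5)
Your reduction is the paper's: $\phi_\pi(F)\cong i_*\mathbb Q_{\{p\}}$ is simple, so it suffices to show $\var_F\neq 0$, and both you and the paper derive this from nontrivial monodromy.

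Your observation that the \emph{local} monodromy on $\widetilde H^3(F_{\mathrm{Mil}})$ is trivial is correct, since $\langle\delta,\delta\rangle=0$ in fiber dimension $3$. It shows that the paper's justification (``$T-\mathrm{id}$ has rank one on the vanishing cohomology'') cannot be read locally, so passing to $H^3(X_t)$ is the right repair. Two points still need fixing.

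First, the relation that does the work is $\var_F\circ\can_F=T-\mathrm{id}$ on $\psi_\pi(F)$, with $\can_F:\psi_\pi(F)\to\phi_\pi(F)$ in the standard convention. The composite $\can_F\circ\var_F$ is an endomorphism of $\phi_\pi(F)$, where $T=\mathrm{id}$ here, so $\var_F=0$ gives no contradiction from it.

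Second, even with the correct relation (and $\pi$ proper), you need $\delta\neq0$ in $H_3(X_t,\mathbb Q)$, and the local model does not supply this. The family $x_1^2+x_2^2+x_3^2+x_4^2=tx_0^2$ in $\mathbb P^4\times\Delta$ has smooth total space and exactly one such node, yet $H^3(X_t)=0$. So its global monodromy is trivial, while the proposition, being a local statement near $p$, still holds there. For Calabi--Yau fibers $\delta\neq 0$ can be proved: the period $\int_\delta\Omega_t$ of a generating relative $3$-form behaves like $ct$ with $c\neq 0$. That is global input your argument would have to supply.

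Your alternative local route fails as stated, because the stalk of $\var_F$ at $p$ is zero. Write $i_p:\{p\}\hookrightarrow X_0$. A morphism $i_*\mathbb Q_{\{p\}}\to\psi_\pi(F)$ is a class in $H^0(i_p^!\psi_\pi F)\cong H^3(F_{\mathrm{Mil}},\partial F_{\mathrm{Mil}})$. Its effect on stalks is the image of that class under the forgetful map $H^3(F_{\mathrm{Mil}},\partial F_{\mathrm{Mil}})\to H^3(F_{\mathrm{Mil}})$. That map is the intersection form on $H_3(F_{\mathrm{Mil}})$, so it vanishes because $\langle\delta,\delta\rangle=0$, the very fact you used above. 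Hence $\var_F$ induces zero on all cohomology sheaves.

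The classical variation goes $H^3(F_{\mathrm{Mil}})\to H^3(F_{\mathrm{Mil}},\partial F_{\mathrm{Mil}})$. It is not the forgetful map, and it is detected by the costalk $i_p^!\var_F$, not the stalk. Redone with costalks, your route could work, but it needs exactly the identification you flagged as the main obstacle.

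A cleaner local proof avoids monodromy altogether. Take $i:X_0\hookrightarrow\mathcal X$. The triangle $\phi_\pi F\xrightarrow{\var_F}\psi_\pi F\to i^!F[2]\xrightarrow{+1}$ gives $\ker(\var_F)\cong{}^pH^0(i^!\mathbb Q_{\mathcal X}[4])$. This object corresponds to the largest perverse subobject of $\mathbb Q_{\mathcal X}[4]$ supported on $X_0$. It is $0$ because $\mathcal X$ is smooth, so $\mathbb Q_{\mathcal X}[4]$ is simple near $p$. This needs neither the simplicity of $\phi_\pi(F)$ nor any global hypothesis such as $\delta\neq 0$.
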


\begin{proof}
By Proposition~\ref{prop:odp-vanishing}, the source
\[
\phi_\pi(F)\cong i_*\mathbb Q_{\{p\}}
\]
is a simple perverse sheaf, since perverse sheaves supported at a
point are equivalent to finite-dimensional $\mathbb Q$-vector spaces.
Thus it is enough to show that $\var_F$ is nonzero. By the identity
\[
\can_F\circ\var_F=T-\mathrm{id}
\]
and the Picard--Lefschetz formula of
Section~\ref{sec:conifold}, the endomorphism
\[
T-\mathrm{id}
\]
has rank one on the vanishing cohomology. In particular,
\[
T-\mathrm{id}\neq 0.
\]
Hence $\var_F\neq 0$, and therefore $\var_F$ is injective.
\end{proof}

\begin{definition}
\label{def:P}
The canonical perverse object associated with the degeneration
\(\pi\) is defined by
\[
\mathcal P:=\Cone(\var_F)[-1].
\]
\end{definition}

\begin{lemma}[Cones of monomorphisms in the heart]
\label{lem:cone-heart}
Let $\mathcal D$ be a triangulated category equipped with a
$t$--structure with heart $\mathcal A$.  If
\[
f:A\to B
\]
is a monomorphism in $\mathcal A$, then
\[
\Cone(f)[-1]\in \mathcal A
\]
and there is a short exact sequence
\[
0\to A\xrightarrow{\,f\,}B\to \Cone(f)[-1]\to 0
\]
in $\mathcal A$.
\end{lemma}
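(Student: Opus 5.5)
The plan is to work entirely with the cohomological functors $H^k_{\mathcal A}:\mathcal D\to\mathcal A$ of the $t$-structure, together with the long exact sequence they attach to a distinguished triangle. Before anything else I will fix the convention for $\Cone$ that makes the statement consistent with its use in Definition~\ref{def:P} and in the definition of $\phi_\pi$. That is, I will read $\Cone(f)[-1]$ as the object $C$ completing $f$ to a distinguished triangle
\[
A\xrightarrow{\,f\,}B\longrightarrow C\longrightarrow A[1].
\]
This is the normalization implicit in the paper, where $\phi_\pi(\mathcal F)=\Cone(i^*\mathcal F\to\psi_\pi(\mathcal F))[-1]$ is placed so that the triangle \eqref{eqn:van-func-dist-triangle} reads $\phi\to\psi\to i^*\mathcal F\xrightarrow{+1}$. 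With this convention fixed, the lemma reduces to the classical fact that the third vertex of the triangle on a monomorphism between objects of the heart lies in the heart and is the cokernel.

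\textbf{Steps.} I will carry out the argument in the following order.

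First, apply $H^\bullet_{\mathcal A}$ to the triangle above. Since $A,B\in\mathcal A$, we have $H^k(A)=H^k(B)=0$ for $k\neq 0$. The long exact sequence then gives $H^k(C)=0$ for $k\neq -1,0$, together with an exact sequence
\[
0\to H^{-1}(C)\to A\xrightarrow{\,f\,}B\to H^0(C)\to 0 .
\]

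Second, use that $f$ is a monomorphism in $\mathcal A$. This forces $H^{-1}(C)=0$, so $C$ has cohomology concentrated in degree $0$.

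Third, conclude that $C\in\mathcal D^{\le 0}\cap\mathcal D^{\ge 0}=\mathcal A$. This uses that a bounded object with vanishing $t$-cohomology outside degree $0$ lies in the heart. Here $C$ is bounded because $A$ and $B$ are.

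Fourth, identify $C\cong H^0(C)\cong\coker(f)$. This yields the short exact sequence $0\to A\to B\to C\to 0$ in $\mathcal A$, since short exact sequences in the heart are exactly the distinguished triangles with all three vertices in $\mathcal A$.

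\textbf{Main obstacle.} The only genuinely delicate point is the shift convention, not the homological algebra. Under the standard convention $A\to B\to\Cone(f)\to A[1]$, the object $\Cone(f)[-1]$ would be $\coker(f)[-1]$, which lies in $\mathcal A[-1]$ rather than in $\mathcal A$ unless the cokernel vanishes. So the proof must state explicitly that $\Cone(f)[-1]$ denotes the third vertex of the triangle starting with $f$, as in the paper's treatment of $\phi_\pi$. I would check that this reading is the one used downstream in Theorem~\ref{thm:main-perverse}, where the sequence $0\to\phi_\pi(F)\to\psi_\pi(F)\to\mathcal P\to 0$ is what is needed. Once that convention is pinned down, Steps 1–4 are routine.
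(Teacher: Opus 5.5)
Your argument is correct and is the paper's argument: the long exact sequence of $t$-cohomology for $A\xrightarrow{f}B\to C\to A[1]$, with injectivity of $f$ killing the kernel term and the surviving term identified with $\coker f$. Your worry about the shift is also exactly where the paper's proof goes wrong. It places the cohomology of $C=\Cone(f)$ in degrees $0$ and $1$ with $H^1(C)\cong\ker f$, then concludes $C[-1]\in\mathcal A$. The correct bookkeeping, which is yours, gives $H^{-1}(C)\cong\ker f$ and $H^0(C)\cong\coker f$, so it is $C$ itself that lies in $\mathcal A$. Hence the lemma, and with it the perversity of $\mathcal P$ in Definition~\ref{def:P}, holds only when $\Cone(f)[-1]$ is read as the standard cone of $f$, as you read it.

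One thing to adjust is your evidence for that reading. Under your convention the definition of $\phi_\pi$ gives the triangle $i^*\mathcal F\to\psi_\pi(\mathcal F)\to\phi_\pi(\mathcal F)\xrightarrow{+1}$, not the drawn triangle \eqref{eqn:van-func-dist-triangle}, which matches the definition under neither convention. The real justification is the standard vanishing-cycle triangle together with the downstream requirement $0\to\phi_\pi(F)\to\psi_\pi(F)\to\mathcal P\to 0$ of Proposition~\ref{prop:P-perverse}, which you already cite.
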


\begin{proof}
Let
\[
A \xrightarrow{f} B \to C \to A[1]
\]
be the distinguished triangle defining $C=\Cone(f)$.
Since $A,B\in\mathcal A$, the only nonzero cohomology objects of $C$
with respect to the given $t$-structure can occur in degrees $0$ and $1$. Because $f$ is a monomorphism in $\mathcal A$, we have
$H^0(C)\cong \coker(f)$ and $H^1(C)\cong \ker(f)=0$. Hence $C[-1]\in\mathcal A$, and the induced exact sequence in the heart is
\[
0\to A\xrightarrow{f}B\to C[-1]\to 0.
\]
\end{proof}

\begin{proposition}
\label{prop:P-perverse}
The object \(\mathcal P\) lies in \(\Perv(X_0)\).  More precisely,
there is a short exact sequence
\[
0\to i_*\mathbb Q_{\{p\}}
\xrightarrow{\ \var_F\ }
\psi_\pi(F)
\longrightarrow
\mathcal P
\to 0
\]
in \(\Perv(X_0)\).
\end{proposition}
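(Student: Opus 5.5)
The plan is to combine Proposition~\ref{prop:var-mono} with Lemma~\ref{lem:cone-heart}. There are three steps.

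\textbf{Step 1: both endpoints of $\var_F$ are perverse.} Since $\mathcal X$ is smooth of complex dimension $4$ (locally it is the total space of $x_1^2+\dots+x_4^2=t$, which is smooth), the complex $\mathbb Q_{\mathcal X}[4]$ is perverse on $\mathcal X$. By Gabber's theorem, nearby and vanishing cycles shifted by $[-1]$ preserve perversity. Hence $\psi_\pi(\mathbb Q_{\mathcal X}[3])=\psi_\pi(\mathbb Q_{\mathcal X}[4])[-1]$ lies in $\Perv(X_0)$, and so does the corresponding vanishing cycle object. For the source, I will instead quote Proposition~\ref{prop:odp-vanishing}, which identifies $\phi_\pi(F)\cong i_*\mathbb Q_{\{p\}}\in\Perv(X_0)$. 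This ensures that $\var_F$ is a morphism inside the heart $\Perv(X_0)$ of the perverse $t$-structure on $D^b_c(X_0)$.

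\textbf{Step 2: apply the lemma.} By Proposition~\ref{prop:var-mono}, $\var_F$ is a monomorphism in $\Perv(X_0)$. Lemma~\ref{lem:cone-heart} applies with $\mathcal D=D^b_c(X_0)$, $\mathcal A=\Perv(X_0)$, $A=\phi_\pi(F)$, $B=\psi_\pi(F)$ and $f=\var_F$.

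\textbf{Step 3: read off the conclusion.} The lemma gives that $\Cone(\var_F)[-1]$ lies in $\Perv(X_0)$ and sits in a short exact sequence
\[
0\to A\to B\to \Cone(\var_F)[-1]\to 0.
\]
Substituting $A\cong i_*\mathbb Q_{\{p\}}$ yields the displayed sequence.

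\textbf{Indexing point to check.} With the standard triangle $A\to B\to \Cone(f)\to A[1]$, the object sitting in the heart is $\Cone(f)$ itself, namely the cokernel. The shifted cone $\Cone(f)[-1]$ is what would appear if $f$ were an epimorphism. So I will have to be careful about which convention makes $\mathcal P$ the cokernel of $\var_F$. Concretely, I will either read Definition~\ref{def:P} and Lemma~\ref{lem:cone-heart} with the cone normalized so that its shift by $[-1]$ is the cokernel, or equivalently identify $\mathcal P$ with $\coker(\var_F)$ in the heart. For a monomorphism in the heart, $\coker(\var_F)$ is the perverse $H^0$ of the cone, and the lemma's proof already makes this identification.

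\textbf{Main obstacle.} The hard part is this normalization issue together with Step 1. Step 1 requires justifying that $\psi_\pi(F)$ is perverse with the paper's shift $[3]$ rather than $[4]$; this rests on the $t$-exactness of $\psi_\pi[-1]$ on the smooth total space. Once both points are settled, the rest is purely formal.
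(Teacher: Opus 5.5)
Your argument is the paper's own proof. The paper gets perversity of $\phi_\pi(F)$ and $\psi_\pi(F)$ from $t$-exactness, then applies Proposition~\ref{prop:var-mono} and Lemma~\ref{lem:cone-heart}, then substitutes Proposition~\ref{prop:odp-vanishing}; your Step~1 makes the shift explicit by using that $\psi_\pi[-1]$ is $t$-exact and $\psi_\pi(\mathbb Q_{\mathcal X}[3])=\psi_\pi(\mathbb Q_{\mathcal X}[4])[-1]$.

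Your indexing worry is justified, and it is an error in the paper's Lemma~\ref{lem:cone-heart} rather than in your plan. For $A\xrightarrow{f}B\to C\to A[1]$ with $A,B$ in the heart, one has ${}^pH^{-1}(C)\cong\ker f$ and ${}^pH^{0}(C)\cong\coker f$. So literally $\Cone(\var_F)[-1]\cong\coker(\var_F)[-1]$, which restricts to $\mathbb Q_U[2]$ on $U$ and is not perverse. You should therefore commit to your second option, $\mathcal P:=\coker(\var_F)\cong\Cone(\var_F)$, for which your Steps~1--3 give exactly the displayed sequence.
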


\begin{proof}
By $t$-exactness of nearby and vanishing cycles for the perverse
$t$-structure, both $\phi_\pi(F)$ and $\psi_\pi(F)$ are perverse sheaves. By Proposition~\ref{prop:var-mono}, the morphism
\[
\var_F:\phi_\pi(F)\to\psi_\pi(F)
\]
is a monomorphism in $\Perv(X_0)$. Therefore Lemma~\ref{lem:cone-heart} applies and shows that
\[
\mathcal P:=\Cone(\var_F)[-1]
\]
lies in $\Perv(X_0)$, with short exact sequence
\[
0\to \phi_\pi(F)\xrightarrow{\var_F}\psi_\pi(F)\to\mathcal P\to 0.
\]
Using Proposition~\ref{prop:odp-vanishing}, this becomes
\[
0\to i_*\mathbb Q_{\{p\}}\xrightarrow{\var_F}\psi_\pi(F)\to\mathcal P\to 0.
\]
\end{proof}

\subsection{Restriction to the smooth locus}

Let
\[
U:=X_0\setminus\{p\},
\qquad
j:U\hookrightarrow X_0,
\qquad
i:\{p\}\hookrightarrow X_0.
\]

\begin{proposition}
\label{prop:P-restriction}
There is a canonical isomorphism
\[
j^*\mathcal P\cong \mathbb Q_U[3].
\]
\end{proposition}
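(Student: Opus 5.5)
We prove the isomorphism by restricting the defining triangle of $\mathcal P$ to $U$ and using two local facts: vanishing cycles are supported at the node, and nearby cycles are the identity away from the critical locus.

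Apply the exact functor $j^*$ to the distinguished triangle
\[
\phi_\pi(F)\xrightarrow{\var_F}\psi_\pi(F)\to\mathcal P\xrightarrow{+1}
\]
underlying Proposition~\ref{prop:P-perverse}. By Proposition~\ref{prop:odp-vanishing} we have $\phi_\pi(F)\cong i_*\mathbb Q_{\{p\}}$. Its restriction to $U$ therefore vanishes,
\[
j^*\phi_\pi(F)\cong j^*i_*\mathbb Q_{\{p\}}=0.
\]
So the restricted triangle degenerates and gives a canonical isomorphism
\[
j^*\psi_\pi(F)\xrightarrow{\ \sim\ } j^*\mathcal P,
\]
namely the restriction of the map $\psi_\pi(F)\to\mathcal P$ from Proposition~\ref{prop:P-perverse}. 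This reduces the claim to identifying $j^*\psi_\pi(F)$.

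To identify $j^*\psi_\pi(F)$, we use the distinguished triangle \eqref{eqn:van-func-dist-triangle} instead. Restricting it to $U$ and again using $j^*\phi_\pi(F)=0$, the map
\[
j^*i^*F\to j^*\psi_\pi(F)
\]
becomes an isomorphism. Here $i^*F=\mathbb Q_{X_0}[3]$, since $i^*$ of a constant sheaf is constant. Hence
\[
j^*i^*F=\mathbb Q_U[3].
\]
Composing the two isomorphisms gives
\[
\mathbb Q_U[3]\cong j^*\psi_\pi(F)\cong j^*\mathcal P,
\]
which is built only from the canonical maps in the two triangles and is therefore canonical.

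One check on the direction of arrows: in the paper's convention the triangle \eqref{eqn:van-func-dist-triangle} is written as $\phi\to\psi\to i^*F\to\phi[1]$, with the map $\psi\to i^*F$ labelled $\can$. Either orientation produces the isomorphism above, because the third term vanishes on $U$.

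The main obstacle is justifying $j^*\phi_\pi(F)=0$ directly rather than only through the identification with a skyscraper sheaf. The fallback is the standard fact that the support of $\phi_\pi(F)$ lies in the critical locus of $\pi$. Near points of $U$, the map $\pi$ is a smooth submersion, since the only critical point on $X_0$ is $p$. Locally there, $\mathcal X\cong X_0\times\Delta$ with $\pi$ the projection. The nearby cycles of a constant sheaf along such a product are the restriction to $X_0$, so $\psi_\pi(F)|_U\cong F|_U$ via the specialization map, and consequently $\phi_\pi(F)|_U=0$.

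A secondary point is that nearby cycles commute with restriction to the open subset $U$, which holds because $\psi_\pi$ is computed locally on $X_0$. With this in place, the argument above goes through without further input.
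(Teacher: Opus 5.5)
Your proof is essentially the paper's. You restrict the sequence $\phi_\pi(F)\to\psi_\pi(F)\to\mathcal P$ of Proposition~\ref{prop:P-perverse} to $U$, use $j^*\phi_\pi(F)=0$, and identify $j^*\psi_\pi(F)\cong\mathbb Q_U[3]$ by local triviality of $\pi$ over $U$; routing that last step through the nearby/vanishing triangle and the specialization map $i^*F\to\psi_\pi(F)$ is the same geometric fact repackaged, with the small gain that the isomorphism is visibly canonical.

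One caveat you inherit from the paper concerns the shift. The triangle $\phi_\pi(F)\to\psi_\pi(F)\to\mathcal P\xrightarrow{+1}$ holds for $\Cone(\var_F)$ itself, whereas the literal definition $\mathcal P=\Cone(\var_F)[-1]$ gives $\phi_\pi(F)\to\psi_\pi(F)\to\mathcal P[1]\xrightarrow{+1}$ and hence $j^*\mathcal P\cong\mathbb Q_U[2]$. So the $[-1]$ in Definition~\ref{def:P} must be dropped for the statement to hold as written. The same off-by-one appears in Lemma~\ref{lem:cone-heart}, where $\ker f$ is $H^{-1}$ of the cone, not $H^{1}$.
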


\begin{proof}
Since $\phi_\pi(F)$ is supported at the singular point $p$, we have
\[
j^*\phi_\pi(F)=0.
\]
Applying $j^*$ to the short exact sequence of Proposition~\ref{prop:P-perverse} gives
\[
j^*\mathcal P \cong j^*\psi_\pi(F).
\]
On the smooth locus the family is locally topologically trivial, so nearby cycles identify with the shifted constant sheaf:
\[
j^*\psi_\pi(F)\cong \mathbb Q_U[3].
\]
Hence
\[
j^*\mathcal P\cong \mathbb Q_U[3].
\]
\end{proof}

\subsection{The skyscraper extension}

Recall that the intersection complex is
\[
\IC_{X_0}:=j_{!*}\mathbb Q_U[3].
\]

\begin{lemma}[Recollement with a point stratum]
\label{lem:recollement-skyscraper}
Let \(X\) be a complex analytic space with an isolated point stratum
\(p\in X\), and let
\[
j:U:=X\setminus\{p\}\hookrightarrow X,
\qquad
i:\{p\}\hookrightarrow X
\]
denote the open and closed immersions.  If \(L\in\Perv(U)\) and
\(\mathcal P\in\Perv(X)\) satisfies
\[
j^*\mathcal P\cong L,
\]
then there is a short exact sequence
\[
0\to j_{!*}L\to \mathcal P\to i_*V\to 0
\]
for some finite-dimensional \(\mathbb Q\)-vector space \(V\).  Moreover,
\[
\dim_{\mathbb Q}V=\dim_{\mathbb Q}\,{}^pH^0(i^*\mathcal P).
\]
\end{lemma}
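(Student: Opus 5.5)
The plan is to build the sequence from the two recollement triangles attached to $j:U\hookrightarrow X$ and $i:\{p\}\hookrightarrow X$, and to use the standard description of $j_{!*}L$ as the image of ${}^pj_!L\to{}^pj_*L$, where ${}^pj_!:={}^pH^0\circ j_!$ and ${}^pj_*:={}^pH^0\circ j_*$.

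\textbf{Step 1 (the two perverse exact sequences).} Apply perverse cohomology to
\[
j_!j^*\mathcal P\to\mathcal P\to i_*i^*\mathcal P\xrightarrow{+1}
\qquad\text{and}\qquad
i_*i^!\mathcal P\to\mathcal P\to j_*j^*\mathcal P\xrightarrow{+1}.
\]
Since $i_*$ is $t$-exact and $j_!$ (resp.\ $j_*$) is right (resp.\ left) $t$-exact, and since $j^*\mathcal P\cong L$, this gives exact sequences in $\Perv(X)$:
\[
{}^pj_!L\xrightarrow{\ a\ }\mathcal P\to i_*\,{}^pH^0(i^*\mathcal P)\to 0,
\qquad
0\to i_*\,{}^pH^0(i^!\mathcal P)\to\mathcal P\xrightarrow{\ b\ }{}^pj_*L .
\]
Set $V:={}^pH^0(i^*\mathcal P)$ and $J:=\operatorname{im}(a)\subset\mathcal P$. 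The first sequence gives $\mathcal P/J\cong i_*V$, so the cokernel automatically has the dimension asserted in the lemma. The composite $b\circ a$ is the canonical map ${}^pj_!L\to{}^pj_*L$, because both are adjoint to $\mathrm{id}_L$. Hence $b(J)=j_{!*}L$.

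\textbf{Step 2 (identifying $J$ with $j_{!*}L$).} Restrict $b$ to $J$. This gives a surjection $J\twoheadrightarrow j_{!*}L$ whose kernel is
\[
K:=J\cap i_*\,{}^pH^0(i^!\mathcal P),
\]
which is supported at $p$. If $K=0$, then $j_{!*}L\cong J\hookrightarrow\mathcal P$ with cokernel $i_*V$, which is exactly the claimed sequence. Two general facts will be used here:
\begin{itemize}
\item $J$ has no nonzero point-supported quotient, since $\operatorname{Hom}({}^pj_!L,i_*W)=\operatorname{Hom}(L,j^*i_*W)=0$;
\item $j_{!*}L$ has no nonzero point-supported subobject or quotient.
\end{itemize}

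\textbf{Main obstacle.} The hard step is proving $K=0$. The two facts above do not force it by themselves: $J$ could a priori be a nonsplit extension of $j_{!*}L$ by a skyscraper. I would first try to show that the extension
\[
0\to K\to J\to j_{!*}L\to 0
\]
splits, using that $J$ is a quotient of ${}^pj_!L$ and that $\operatorname{Ext}^1(j_{!*}L,i_*W)$ is controlled by ${}^pH^{1}(i^!j_{!*}L)$. If splitting fails in general, the fallback is to check $K=0$ directly in the situation where the lemma is applied. There I would verify ${}^pH^0(i^!\mathcal P)=0$ by duality from the sequence of Proposition~\ref{prop:P-perverse}. Even in that case the argument of Steps 1--2 yields the weaker conclusion that $\mathcal P$ has a canonical filtration $K\subset J\subset\mathcal P$ with graded pieces $K$, $j_{!*}L$ and $i_*V$.
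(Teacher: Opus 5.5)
\textbf{There is a genuine gap: the step $K=0$ cannot be proved, because the lemma is false as stated.} Your Steps 1 and 2 are correct, and you have identified the right obstacle.

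A counterexample already occurs in the paper's setting. Take $X=X_0$ a threefold with a node, $L=\mathbb Q_U[3]$, and $\mathcal P=\mathbb Q_{X_0}[3]$; this is perverse because $X_0$ is locally a hypersurface. Since $i^*\mathcal P=\mathbb Q[3]$ has no cohomology in degrees $-1$ and $0$, your map $a$ is an isomorphism, so $\mathcal P\cong{}^pj_!L$. Hence $J=\mathcal P$ and $V={}^pH^0(i^*\mathcal P)=0$, and the lemma would force $\IC_{X_0}\cong\mathbb Q_{X_0}[3]$. But the link of the node is $S^2\times S^3$, so the stalk at $p$ of $\IC_{X_0}=\tau_{\le -1}Rj_*\mathbb Q_U[3]$ has $H^{-1}\cong H^2(S^2\times S^3;\mathbb Q)\cong\mathbb Q$. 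In fact $0\to i_*\mathbb Q_{\{p\}}\to\mathbb Q_{X_0}[3]\to\IC_{X_0}\to 0$, i.e.\ $K\cong i_*\mathbb Q_{\{p\}}\neq 0$. A simpler example: $X=\mathbb C$, $L=\mathbb Q_{\mathbb C^*}[1]$, $\mathcal P=j_!L$, where $\mathrm{Hom}(j_{!*}L,\mathcal P)=H^0(\mathbb C,j_!\mathbb Q_{\mathbb C^*})=0$.

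The splitting route cannot help either. By your own first bullet, $J$ has no nonzero point-supported quotient, so $0\to K\to J\to j_{!*}L\to 0$ splits if and only if $K=0$. Also, $\Ext^1(j_{!*}L,i_*W)\cong\mathrm{Hom}(i^*j_{!*}L,W[1])$ is governed by ${}^pH^{-1}(i^*j_{!*}L)$ (adjunction $i^*\dashv i_*$, not $i^!$). This group is $\mathbb Q$ for the node, so there is no vanishing to exploit.

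The paper's proof has the same gap in hidden form. It invokes ``the induced morphism $j_{!*}L\to\mathcal P$'' coming from $j^*j_{!*}L\cong j^*\mathcal P$, but no such morphism exists in general. The canonical maps are only ${}^pj_!L\xrightarrow{a}\mathcal P\xrightarrow{b}{}^pj_*L$, exactly as in your Step 1. The object $j_{!*}L$ is a quotient of ${}^pj_!L$ and a subobject of ${}^pj_*L$, not of $\mathcal P$.

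What your argument does prove is the corrected statement. The sequence exists precisely when $K=0$, because $J$ is the smallest subobject of $\mathcal P$ with point-supported cokernel and perverse sheaves have finite length. In particular it exists whenever ${}^pH^0(i^!\mathcal P)=0$, i.e.\ when $\mathcal P$ has no nonzero subobject supported at $p$. Then $b|_J$ is injective, $J\cong j_{!*}L$, and the cokernel is $i_*V$ with $V={}^pH^0(i^*\mathcal P)$.

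So your ``fallback'' is the only correct route to part (ii) of Theorem~\ref{thm:main-perverse}: one must verify ${}^pH^0(i^!\mathcal P)=0$ for $\mathcal P=\coker(\var_F)$. This does hold. With standard shift conventions and $i_0:X_0\hookrightarrow\mathcal X$, one gets $\coker(\var_F)\cong{}^pH^1(i_0^!\mathbb Q_{\mathcal X}[4])\cong\mathbb D(\mathbb Q_{X_0}[3])\cong{}^pj_*\mathbb Q_U[3]$, which has no point-supported subobjects. This is extra input that neither the lemma nor its proof in the paper supplies.
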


\begin{proof}
By the recollement formalism for the open--closed decomposition
\(X=U\sqcup\{p\}\), the intermediate extension \(j_{!*}L\) is the image
of the canonical morphism
\[
j_!L\to j_*L
\]
in the abelian category \(\Perv(X)\) \cite{BBD}.  Since
\(j^*(j_{!*}L)\cong L\cong j^*\mathcal P\), the induced morphism
\[
j_{!*}L\to \mathcal P
\]
is injective.  Its cokernel is supported on the closed stratum
\(\{p\}\), and any perverse sheaf supported at a point is of the form
\(i_*V\) for a finite-dimensional vector space \(V\).  Applying \(i^*\)
and taking perverse cohomology identifies
\[
V\cong {}^pH^0(i^*\mathcal P),
\]
hence the stated dimension formula.
\end{proof}

\begin{proposition}
\label{prop:local-stalk}
The perverse stalk of \(\mathcal P\) at the singular point is
one-dimensional:
\[
\dim_{\mathbb Q}\,{}^pH^0(i^*\mathcal P)=1.
\]
\end{proposition}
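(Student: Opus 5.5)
We will compute ${}^pH^0(i^*\mathcal P)$ from the short exact sequence of Proposition~\ref{prop:P-perverse},
\[
0\to i_*\mathbb Q_{\{p\}}\xrightarrow{\var_F}\psi_\pi(F)\to\mathcal P\to 0,
\]
viewed as a distinguished triangle in $D^b_c(X_0)$. Applying $i^*$ and the long exact sequence of perverse cohomology on the point gives
\[
{}^pH^0(i^*i_*\mathbb Q_{\{p\}})\xrightarrow{\ a\ }{}^pH^0(i^*\psi_\pi(F))\to{}^pH^0(i^*\mathcal P)\to{}^pH^1(i^*i_*\mathbb Q_{\{p\}})=0,
\]
using that $i^*i_*\mathbb Q_{\{p\}}\cong\mathbb Q_{\{p\}}$ sits in degree $0$. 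Hence the dimension we want is $\dim{}^pH^0(i^*\psi_\pi(F))-\operatorname{rank}(a)$, and by Proposition~\ref{prop:var-mono} together with simplicity of the source the map $a$ should be injective, so this equals $\dim{}^pH^0(i^*\psi_\pi(F))-1$.

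The core computation is therefore ${}^pH^0(i^*\psi_\pi(F))$. On a point the perverse $t$-structure is the standard one, so this is the ordinary stalk cohomology in degree $0$ of $\psi_\pi(F)=\psi_\pi(\mathbb Q_{\mathcal X})[3]$ at $p$, i.e.\ $H^3$ of the Milnor fiber. By the identification used in Lemma~\ref{lem:vanishing-rank-one}, the stalk of $\psi_\pi\mathbb Q_{\mathcal X}$ at $p$ computes $H^\ast(F_{\mathrm{Mil}},\mathbb Q)$ with $F_{\mathrm{Mil}}\simeq S^3$, so $H^3=\mathbb Q$, and also $H^0=\mathbb Q$, which contributes in perverse degree $-3$ and so does not enter. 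This would give $\dim{}^pH^0(i^*\psi_\pi F)=1$, and then the above formula would give $0$, not $1$. To reconcile this, I would instead argue via the distinguished triangle \eqref{eqn:van-func-dist-triangle} restricted to $p$: the stalk of $\var$ in degree $0$ is $\widetilde H^3(F_{\mathrm{Mil}})\to H^3(F_{\mathrm{Mil}})$, and I would check whether the convention intended here makes $a$ zero on stalks (the variation being nonzero as a map of perverse sheaves yet vanishing after $i^*$ is possible, since $i^*$ is only right $t$-exact). If $a=0$, the sequence gives $\dim{}^pH^0(i^*\mathcal P)=1$ immediately.

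Alternatively, and more robustly, I would use the structure of $\mathcal P$ near $p$: by Lemma~\ref{lem:recollement-skyscraper}, $\dim{}^pH^0(i^*\mathcal P)=\dim V$, where $V$ is the cokernel of $\IC_{X_0}\to\mathcal P$. Since ${}^pH^0(i^*\IC_{X_0})=0$ (the defining support condition for $j_{!*}$), the right-exactness of ${}^pH^0 i^*$ gives ${}^pH^0(i^*\mathcal P)\cong V$. So it suffices to show that $\mathcal P$ is not the intermediate extension, i.e.\ that $\mathcal P$ has a nonzero quotient supported at $p$, and that this quotient is one-dimensional, which I would derive from the one-dimensionality of $\phi_\pi(F)$ via Lemma~\ref{lem:vanishing-rank-one} and Proposition~\ref{prop:odp-vanishing}.

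The main obstacle is exactly the stalk computation of the map $a=i^*(\var_F)$ in degree $0$ and the identification of $H^3$ of the Milnor fiber with the vanishing contribution. I expect the bookkeeping of shifts and of which stalk group the variation hits to be the delicate point, and I would first try to settle it in the local model $x_1^2+\dots+x_4^2=t$ directly.
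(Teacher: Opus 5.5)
Your first route sets up the right computation. From $0\to i_*\mathbb Q_{\{p\}}\xrightarrow{\var_F}\psi_\pi(F)\to\mathcal P\to 0$ and right $t$-exactness of $i^*$ you correctly get $\dim{}^pH^0(i^*\mathcal P)=1-\operatorname{rank}(a)$, where $a=H^0(i^*\var_F)\colon \widetilde H^3(F_{\mathrm{Mil}})\to H^3(F_{\mathrm{Mil}})$. But you never prove $a=0$; you only propose to check it, and that is the whole content of the proposition. (Your initial guess that $a$ is injective was, as you noticed, unjustified: a monomorphism in $\Perv(X_0)$ need not stay injective after $i^*$.)

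The fallback route is circular. By Lemma~\ref{lem:recollement-skyscraper}, or your own right-exactness argument, $V\cong{}^pH^0(i^*\mathcal P)$, so ``show $\dim V=1$'' is the statement itself. It also cannot be extracted from $\dim\phi_\pi(F)=1$ alone. In the analogous surface degeneration $x_1^2+x_2^2+x_3^2=t$ the vanishing cycle is still rank one, but the local monodromy is $-1$. There $\can\circ\var$ is invertible on $\phi$, so $a\neq 0$, and $\coker(\var)\cong\mathbb Q_{X_0}[2]\cong\IC_{X_0}$ has zero perverse stalk at the node. The paper's own proof makes the same leap from the rank of $\phi_\pi(F)$ to the stalk of $\mathcal P$, so it does not supply this step for you.

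The missing input is that the \emph{local} monodromy on $\phi_\pi(F)=i_*\widetilde H^3(F_{\mathrm{Mil}})$ is trivial. For $f=x_1^2+\dots+x_4^2$ it is induced by $x\mapsto -x$, the antipodal map on the vanishing $S^3$, which has degree $(-1)^4=1$ (equivalently, $\delta\cdot\delta=0$). The rank-one Picard--Lefschetz operator is $\var\circ\can=T-\mathrm{id}$ on $\psi_\pi(F)$; on $\phi_\pi(F)$ the monodromy is the identity.

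The argument then runs as follows.
\begin{enumerate}
\item In the standard normalization, $\can\colon\psi_\pi(F)\to\phi_\pi(F)$ sits in the triangle $F|_{X_0}\to\psi_\pi(F)\xrightarrow{\can}\phi_\pi(F)\xrightarrow{+1}$, and $\can\circ\var=T-\mathrm{id}=0$ on $\phi_\pi(F)$.
\item Applying $\mathrm{Hom}(\phi_\pi(F),-)$ to this triangle, $\var_F$ lifts to a morphism $\phi_\pi(F)\to F|_{X_0}=\mathbb Q_{X_0}[3]$.
\item Hence $i^*\var_F$ factors through $i^*\mathbb Q_{X_0}[3]=\mathbb Q_{\{p\}}[3]$, whose $H^0$ is zero, so $a=0$.
\item Your exact sequence then gives ${}^pH^0(i^*\mathcal P)\cong H^0(i^*\psi_\pi(F))\cong H^3(F_{\mathrm{Mil}};\mathbb Q)\cong\mathbb Q$.
\end{enumerate}

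Geometrically, the stalk map you wrote factors as $\widetilde H^3(F_{\mathrm{Mil}})\to H^3(F_{\mathrm{Mil}},\partial F_{\mathrm{Mil}})\to H^3(F_{\mathrm{Mil}})$. The second arrow is the intersection form on $H_3(F_{\mathrm{Mil}})=\mathbb Q\delta$, which vanishes. So $a=0$ is forced by the geometry of the odd-dimensional node, not by a choice of convention.
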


\begin{proof}
For an ordinary double point, the vanishing cohomology is one-dimensional
in degree $3$:
\[
H^3_{\mathrm{van}}(X_t,\mathbb Q)\cong\mathbb Q,
\qquad
H^k_{\mathrm{van}}(X_t,\mathbb Q)=0 \ \text{for } k\neq 3.
\]
After the shift by $[3]$, this yields a one-dimensional point-supported
perverse contribution in $\phi_\pi(F)\cong i_*\mathbb Q_{\{p\}}$.
Since $\mathcal P$ is the cokernel of the monomorphism
\[
\phi_\pi(F)\xrightarrow{\var_F}\psi_\pi(F)
\]
in $\Perv(X_0)$, the resulting local extension data contributes a single
one-dimensional perverse stalk at $p$. Equivalently,
\[
\dim_{\mathbb Q}\,{}^pH^0(i^*\mathcal P)=1.
\]
\end{proof}

\begin{proposition}
\label{prop:skyscraper}
There is a short exact sequence in \(\Perv(X_0)\)
\[
0\longrightarrow \IC_{X_0}
\longrightarrow \mathcal P
\longrightarrow i_*\mathbb Q_{\{p\}}
\longrightarrow 0.
\]
\end{proposition}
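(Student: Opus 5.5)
The plan is to assemble the short exact sequence directly from the results already established. The input is Lemma~\ref{lem:recollement-skyscraper}, applied with $X=X_0$, $L=\mathbb Q_U[3]$, and the object $\mathcal P$. Its hypotheses are supplied by two earlier results. Proposition~\ref{prop:P-perverse} gives $\mathcal P\in\Perv(X_0)$. Proposition~\ref{prop:P-restriction} gives a canonical isomorphism $j^*\mathcal P\cong\mathbb Q_U[3]$. The perverse condition on $U$ is also satisfied, since $U$ is smooth of complex dimension $3$ and hence $\mathbb Q_U[3]\in\Perv(U)$.

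With these inputs, the recollement lemma yields a short exact sequence
\[
0\to j_{!*}\mathbb Q_U[3]\to\mathcal P\to i_*V\to 0
\]
in $\Perv(X_0)$, where $V$ is a finite-dimensional $\mathbb Q$-vector space with $\dim_{\mathbb Q}V=\dim_{\mathbb Q}{}^pH^0(i^*\mathcal P)$. Since $\IC_{X_0}:=j_{!*}\mathbb Q_U[3]$ by definition, the first term is $\IC_{X_0}$.

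It remains to identify the cokernel. Proposition~\ref{prop:local-stalk} gives $\dim_{\mathbb Q}{}^pH^0(i^*\mathcal P)=1$, so $V\cong\mathbb Q$ and therefore $i_*V\cong i_*\mathbb Q_{\{p\}}$. Substituting this into the sequence above gives the claimed short exact sequence.

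I expect the main obstacle to be the injectivity of $\IC_{X_0}\to\mathcal P$ and the identification of its cokernel. The recollement lemma asserts injectivity, but the map $j_{!*}L\to\mathcal P$ exists and is injective only when $\mathcal P$ has no nonzero subobjects supported at $p$, that is, when ${}^pH^0(i^!\mathcal P)=0$. Without this, one only knows that $\IC_{X_0}$ is a subquotient of $\mathcal P$.

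To check this condition, I would dualize the presentation of $\mathcal P$ as $\coker(\var_F)$. A sub-skyscraper of $\mathcal P$ would lift to a point-supported subobject of $\psi_\pi(F)$ strictly containing the image of $\var_F$. I would rule this out using the fact that $\psi_\pi(F)$ is self-dual up to Tate twist and has a one-dimensional invariant-cycle contribution at $p$. If this check fails, I would instead cite the recollement lemma exactly as stated in the paper and accept its conclusion as given.
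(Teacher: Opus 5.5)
Your main line is exactly the paper's proof: Lemma~\ref{lem:recollement-skyscraper} with $L=\mathbb Q_U[3]$, the definition $\IC_{X_0}=j_{!*}\mathbb Q_U[3]$, and $V\cong\mathbb Q$ from Proposition~\ref{prop:local-stalk}. Your worry about that lemma is justified, and the paper's proof does not address it. The lemma is false as stated, because a morphism $j_{!*}L\to\mathcal P$ extending $j^*\mathcal P\cong L$ need not exist.

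A counterexample lives in this very geometry. $\mathbb Q_{X_0}[3]$ is perverse, since $X_0$ is a hypersurface in the smooth $\mathcal X$. It restricts to $\mathbb Q_U[3]$ and has ${}^pH^0(i^*\mathbb Q_{X_0}[3])=0$, so the lemma would force $\mathbb Q_{X_0}[3]\cong\IC_{X_0}$. But the link of the node is $S^2\times S^3$, so $H^{-1}$ of the stalk of $\IC_{X_0}$ at $p$ is $H^2(S^2\times S^3;\mathbb Q)\cong\mathbb Q$. In fact one has $0\to i_*\mathbb Q_{\{p\}}\to\mathbb Q_{X_0}[3]\to\IC_{X_0}\to 0$. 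So your fallback of citing the lemma as stated is not available, and the check you propose is a required step. A minor point: ${}^pH^0(i^!\mathcal P)=0$ is sufficient rather than necessary, since $\IC_{X_0}\oplus i_*W$ also satisfies the conclusion.

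Your check does go through; here is a precise version.
\begin{itemize}
\item Since $j^*$ is exact, the preimage in $\psi_\pi(F)$ of a point-supported subobject of $\mathcal P$ is point-supported and contains $\var_F(\phi_\pi(F))\cong i_*\mathbb Q_{\{p\}}$.
\item So it suffices that the maximal point-supported subobject $i_*\,{}^pH^0(i^!\psi_\pi(F))$ of $\psi_\pi(F)$ be one-dimensional. It then equals the image of $\var_F$.
\item Since $\mathcal X$ is smooth, $\psi_\pi(F)$ is Verdier self-dual up to Tate twist. Hence ${}^pH^0(i^!\psi_\pi(F))$ is dual to ${}^pH^0(i^*\psi_\pi(F))$, the degree-$0$ stalk cohomology of $\psi_\pi(F)$ at $p$, which is $H^3(F_p;\mathbb Q)\cong\mathbb Q$.
\end{itemize}
The input you need is therefore this top Milnor-fibre cohomology, not an invariant-cycle statement. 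Equivalently, apply the left exact functor ${}^pH^0 i^!$ to $0\to\phi_\pi(F)\to\psi_\pi(F)\to\mathcal P\to 0$ and use ${}^pH^1(i^!\phi_\pi(F))=0$.

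There is also a cleaner route that bypasses Proposition~\ref{prop:local-stalk}, whose count implicitly requires $\var_F$ to vanish on the degree-$0$ stalk at $p$. Write $i_0:X_0\hookrightarrow\mathcal X$. The variation triangle $\phi_\pi(F)\xrightarrow{\var_F}\psi_\pi(F)\to i_0^!F[2]\xrightarrow{+1}$ identifies $\coker(\var_F)$ with $i_0^!\mathbb Q_{\mathcal X}[5]\cong\mathbb D(\mathbb Q_{X_0}[3])$, up to Tate twist. The proposition is then simply the Verdier dual of the sequence $0\to i_*\mathbb Q_{\{p\}}\to\mathbb Q_{X_0}[3]\to\IC_{X_0}\to 0$ above.
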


\begin{proof}
By Proposition~\ref{prop:P-restriction}, \(\mathcal P\) is a perverse
extension of \(\mathbb Q_U[3]\).  Applying
Lemma~\ref{lem:recollement-skyscraper} with \(L=\mathbb Q_U[3]\) gives
a short exact sequence
\[
0\to j_{!*}\mathbb Q_U[3]\to \mathcal P\to i_*V\to 0.
\]
Since
\[
j_{!*}\mathbb Q_U[3]=\IC_{X_0}
\]
and Proposition~\ref{prop:local-stalk} shows that \(V\) is
one-dimensional, it follows that
\[
V\cong\mathbb Q.
\]
This proves the claim.
\end{proof}
\begin{remark}[On Verdier self-duality]
The object $\mathcal P$ is expected to be Verdier self-dual in the ordinary double point setting, compatibly with the self-duality of the nearby and vanishing cycle formalism. Since the present paper is organized around the extension-theoretic and monodromy-theoretic properties of $\mathcal P$, we do not use this self-duality in the proofs of the main extension statements. A more detailed treatment of duality, including the precise shifts and functorial identifications, will be taken up separately.
\end{remark}
\begin{remark}
Proposition~\ref{prop:skyscraper} shows that the canonical perverse
object \(\mathcal P\) differs from the intersection complex by a single
rank-one point-supported contribution.  In this sense, \(\mathcal P\)
records precisely the local vanishing-cycle correction contributed by
the node.
\end{remark}

\section{Schober interpretation and decategorification}

We now interpret the perverse sheaf $\mathcal{P}$ constructed in the previous section in the language of categorical monodromy and perverse schobers.

\subsection{Spherical objects and monodromy}

Let $X_t$ denote a smooth fiber of the degeneration
$\pi : \mathcal{X} \to \Delta$. In the bounded derived category of coherent sheaves $D^b(X_t) := D^b(\mathrm{Coh}(X_t))$, the vanishing $3$--sphere associated to the ordinary double point determines a spherical object in $D^b(X_t)$.

For objects $E,F \in D^b(X_t)$, we write
\[
\RHom(E,F)
\]
for the derived Hom complex \cite{DimcaSheaves}, whose cohomology groups satisfy
\[
H^k(\RHom(E,F)) \cong \Ext^k_{D^b(X_t)}(E,F).
\]

An object $S \in D^b(X_t)$ is called \emph{spherical}
if
\[
\Ext^*_{D^b(X_t)}(S,S)
\cong H^*(S^3,\mathbb{Q}),
\]
and the associated twist functor
\[
T_S(-) :=
\Cone\!\left(
\RHom(S,-)\otimes S \longrightarrow -
\right)
\]
defines an autoequivalence of $D^b(X_t)$
\cite{SeidelThomas}. In the case of an ODP degeneration,
Picard--Lefschetz theory implies that monodromy around $t=0$
acts on $H^3(X_t,\mathbb Q)$ by the classical reflection
\[
x \longmapsto x + \langle x, \delta\rangle \delta,
\]
where $\delta$ is the vanishing cycle.
Categorically, this reflection is realized by the spherical twist $T_S$
associated to the vanishing object.
More generally, this structure can be expressed in terms of spherical
functors whose cotwist encodes categorical monodromy
\cite{AnnoLogvinenko}.

\subsection{Perverse schober over the disk}

Kapranov and Schechtman define a \emph{perverse schober} as a
categorification of a perverse sheaf, replacing vector spaces
by triangulated (or stable $\infty$-) categories and linear maps
by exact functors \cite{KapranovSchechtman}.
In the local model of a complex disk $\Delta$ with a single
marked point at the origin, the data of a perverse schober
with one singularity is equivalent to the following
\cite{KapranovSchechtman}:

\begin{itemize}
\item a triangulated category $\mathcal C$ (the ``generic fiber''),
\item a triangulated category $\mathcal A$ (the ``singular fiber''),
\item and a spherical functor
\[
\mathcal F : \mathcal A \longrightarrow \mathcal C
\]
whose associated twist autoequivalence
\[
T_{\mathcal F} := 
\Cone\!\left(
\mathcal F \circ R \longrightarrow \mathrm{Id}_{\mathcal C}
\right)
\]
encodes the categorical monodromy around the origin.
\end{itemize}

Here $R$ denotes the right adjoint of $\mathcal F$, and the spherical
condition ensures that $T_{\mathcal F}$ is an autoequivalence
\cite{AnnoLogvinenko}. In geometric situations arising from degenerations, the category $\mathcal C$ is typically taken to be
the derived category of the smooth fiber,
\[
\mathcal C := D^b(X_t),
\]
and the spherical functor is induced by the vanishing cycle
or, equivalently in the rank-one case, by a spherical object
$S \in D^b(X_t)$. In this situation, the spherical functor formalism reduce to the spherical twist
\[
T_S(-) := \Cone\!\left(
\RHom(S,-)\otimes S \longrightarrow -
\right)
\]
introduced by Seidel--Thomas \cite{SeidelThomas}. Thus, in the ordinary double point degeneration considered here, the local degeneration data determines a perverse schober
\[
\mathfrak S_\pi
\]
over the disk whose categorical monodromy is given by the spherical twist $T_S$ associated to the vanishing cycle.

\subsection{Decategorification in the local disk model}

In the disk model of Kapranov--Schechtman,
the essential monodromy data of a schober
is captured by the spherical twist acting on the fiber category
\cite{KapranovSchechtman}.
We therefore define \emph{decategorification} in the present context
to mean passage to the Grothendieck group of the fiber category,
namely
\[
K_0\!\big(D^b(X_t)\big).
\]

Spherical twists act on $K_0$ via the Euler pairing:
\[
x \longmapsto x + \chi(x,[S])\, [S],
\]
where $\chi$ denotes the Euler form
\cite{SeidelThomas}.
Under suitable realization functors
(e.g.\ Chern character),
this recovers the classical Picard--Lefschetz reflection
on middle cohomology.

\subsection{Intertwining with the perverse model}
\label{sec:intertwine-perv-model}
We now explain how the perverse sheaf $\mathcal P$ constructed from nearby and vanishing cycles provides the natural decategorified comparison object.

\begin{proposition}\label{prop:decategorified-intertwining}
Assume the categorical monodromy of the degeneration is governed by a spherical object $S \in D^b(X_t)$ with twist $T_S$. Assume further that we are given an additive realization
\[
\rho : K_0\!\big(D^b(X_t)\big)
\longrightarrow H^3(X_t,\mathbb Q)
\]
such that
\[
\rho([S]) = \delta
\]
is the vanishing cycle and
\[
\rho \circ (T_S)_* = T \circ \rho,
\]
where $T$ denotes classical Picard--Lefschetz monodromy
on $H^3(X_t,\mathbb Q)$ and one obtains a natural comparison from the decategorified spherical-monodromy data to the hypercohomology of 
$\mathcal P$.
\end{proposition}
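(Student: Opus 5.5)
The statement is mostly a matter of composing maps that already exist, so the plan is to build an explicit chain of linear maps from decategorified spherical-monodromy data to $\mathbb H^*(X_0,\mathcal P)$ and then check that it respects the rank-one monodromy structure on both sides.

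\emph{Target side.} Take hypercohomology of the short exact sequence of Proposition~\ref{prop:P-perverse},
\[
0\to i_*\mathbb Q_{\{p\}}\xrightarrow{\var_F}\psi_\pi(F)\to\mathcal P\to 0 .
\]
This gives a long exact sequence. Using $\mathbb H^k(X_0,\psi_\pi(F))\cong H^{k+3}(X_t,\mathbb Q)$, which holds because $\pi$ is proper, the relevant piece in degree $0$ reads
\[
\mathbb Q\xrightarrow{\ \var\ } H^3(X_t,\mathbb Q)\longrightarrow \mathbb H^0(X_0,\mathcal P)\longrightarrow \mathbb H^1(X_0,i_*\mathbb Q_{\{p\}})=0 .
\]
I will show that the image of $\mathbb Q$ is the line $\mathbb Q\delta$. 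For this I use the identity $\can_F\circ\var_F=T-\mathrm{id}$ together with the Picard--Lefschetz formula: the image of $T-\mathrm{id}$ is $\mathbb Q\delta$, and it is nonzero by Proposition~\ref{prop:var-mono}. The conclusion is $\mathbb H^0(X_0,\mathcal P)\cong H^3(X_t,\mathbb Q)/\mathbb Q\delta$.

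\emph{Source side.} Compose $\rho$ with this quotient map to get
\[
\bar\rho:K_0(D^b(X_t))\to \mathbb H^0(X_0,\mathcal P).
\]
From the formula for spherical twists on $K_0$, $(T_S)_*x-x=\chi(x,[S])[S]$. Applying $\rho$ and the hypothesis $\rho\circ(T_S)_*=T\circ\rho$, we get $\rho((T_S)_*x)-\rho(x)=\chi(x,[S])\,\delta$, and this lies in $\mathbb Q\delta$. Hence $\bar\rho\circ(T_S)_*=\bar\rho$, so $\bar\rho$ factors through the coinvariants $K_0(D^b(X_t))_{T_S}$. Moreover $\bar\rho([S])=0$. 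This factorization is the comparison map the statement asks for. Naturality means compatibility with automorphisms of the degeneration preserving $S$ and $\delta$, and it follows because every map in the construction is functorial: $\var_F$, the quotient, and $\rho$.

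\emph{Main obstacle.} The hard step is identifying the image of $\var$ in $H^3(X_t)$ as precisely $\mathbb Q\delta$, rather than merely some line. The relation $\can\circ\var=T-\mathrm{id}$ only controls the composite with $\can$. I would try to pass through global hypercohomology, using the fact that $\var$ on the stalk at $p$ is the local variation map $\widetilde H^3(F)\to H^3_c(F)$, which sends the generator to the dual of the vanishing class. Poincaré duality on $X_t$ then identifies the image with $\mathbb Q\delta$.

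If this identification turns out to be delicate, for instance when $\delta$ is nullhomologous globally, I would weaken the statement. The comparison map would still exist and be monodromy-invariant, but it would be defined on the image of the local variation rather than asserted to kill exactly $\mathbb Q\delta$.
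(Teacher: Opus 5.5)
Your comparison map is the same one the paper uses. The paper's proof sets $\Phi=\iota\circ\rho$, where $\iota\colon H^3(X_t,\mathbb Q)\to\mathbb H^*(X_0,\mathcal P)$ is the map induced by the nearby/vanishing-cycle sequence. It asserts that $\iota$ is injective and commutes with $T$, and then gets $\Phi\circ(T_S)_*=T\circ\Phi$ by a formal one-line computation. The real difference is that you actually compute $\iota$, and your computation is the correct one.

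From $0\to i_*\mathbb Q_{\{p\}}\to\psi_\pi(F)\to\mathcal P\to 0$, the map $H^3(X_t,\mathbb Q)\to\mathbb H^0(X_0,\mathcal P)$ is surjective, and its kernel is the image of the global variation. So it is \emph{not} injective once $\delta\neq 0$, contrary to the paper's $\hookrightarrow$. Moreover, $T$ acts as the identity on $\mathcal P$. With standard conventions ($\can\colon\psi_\pi(F)\to\phi_\pi(F)$), $T$ descends to the cokernel because $\var_F$ is $T$-equivariant. Also $T-\mathrm{id}=\var_F\circ\can$ on $\psi_\pi(F)$ has image inside $\var_F(\phi_\pi(F))$. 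Hence the paper's square commutes only in the degenerate form $\Phi\circ(T_S)_*=\Phi$ with $\Phi([S])=0$, which is exactly your factorization through coinvariants.

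As a cross-check, with the standard perverse normalizations, $\coker(\var_F)\cong i^!\mathbb Q_{\mathcal X}[5]\cong\mathbb D(\mathbb Q_{X_0}[3])$ for $i\colon X_0\hookrightarrow\mathcal X$. Hence $\mathbb H^0(X_0,\mathcal P)\cong H_3(X_0,\mathbb Q)\cong H_3(X_t,\mathbb Q)/\mathbb Q\delta$, as you claim. The paper's route is shorter but rests on a false injectivity claim. Yours shows what the comparison actually records: the vanishing class is killed, not retained.

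The step that fails as written is your justification of $\mathrm{im}(\var)=\mathbb Q\delta$ on $\mathbb H^0$. The relation $\can_F\circ\var_F=T-\mathrm{id}$ you invoke is mis-ordered in the paper. With $\can\colon\psi_\pi(F)\to\phi_\pi(F)$, the composite $\can\circ\var$ is an endomorphism of $\phi_\pi(F)$. It is zero here, since the local monodromy on $\widetilde H^3$ of the Milnor fiber of $x_1^2+\cdots+x_4^2$ is $(-1)^4=1$.

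Proposition~\ref{prop:var-mono} does not give non-vanishing on global hypercohomology either. A monomorphism of perverse sheaves can induce zero on $\mathbb H^0$, and this one does when $\delta=0$ in $H_3(X_t,\mathbb Q)$.

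Use instead $\var\circ\can=T-\mathrm{id}$ on $\psi_\pi(F)$. On $\mathbb H^0$ it gives $\mathbb Q\delta=\mathrm{im}(T-\mathrm{id})\subseteq\mathrm{im}(\var)$. That inclusion alone already yields $\bar\rho\circ(T_S)_*=\bar\rho$. Since the source of $\var$ is one-dimensional, it also gives equality when $\delta\neq 0$. The case $\delta=0$ is handled uniformly by the factorization you sketch of the global variation as $\widetilde H^3(F_p)\xrightarrow{\mathrm{Var}}H^3_c(F_p)\to H^3(X_t,\mathbb Q)$, so no weakening of the statement is needed.

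Finally, naturality under automorphisms of the degeneration requires $\rho$ itself to be equivariant. That is an extra hypothesis, not a consequence of the functoriality of $\var_F$.
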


\begin{proof}
By construction, the perverse sheaf $\mathcal P$ captures precisely the rank-one vanishing-cycle contribution in middle degree. Its hypercohomology therefore carries the same
Picard--Lefschetz action $T$ on the vanishing summand.

Define
\[
\Phi := \iota \circ \rho,
\]
where
\[
\iota : H^3(X_t,\mathbb Q)
\hookrightarrow
\mathbb H^*(X_0,\mathcal{P})
\]
is the natural map induced by the nearby/vanishing cycle
triangle. The compatibility established above may be summarized by the commutative diagram
\[
\begin{tikzcd}
K_0(D^b(X_t)) \arrow[r,"(T_S)_*"] \arrow[d,"\Phi"] 
& K_0(D^b(X_t)) \arrow[d,"\Phi"] \\
\mathbb H^*(X_0,\mathcal{P}) \arrow[r,"T"] 
& \mathbb H^*(X_0,\mathcal{P})
\end{tikzcd}
\]
where $(T_S)_*$ denotes the spherical twist action on $K_0$
and $T$ denotes the Picard--Lefschetz monodromy operator.
Then
\[
\Phi\circ (T_S)_*
=
\iota\circ \rho\circ (T_S)_*
=
\iota\circ T\circ \rho
=
T\circ \iota\circ \rho
=
T\circ \Phi.
\]
\end{proof}

\section{Proofs of the main results}
\label{sec:proofs-main}

We now assemble the structural results established in the preceding
sections and prove the main theorems stated in
Section~\ref{sec:intro-results}.

\subsection{Proof of Theorem~\ref{thm:main-perverse}}

\begin{proof}[Proof of Theorem~\ref{thm:main-perverse}]
Let
\[
F:=\mathbb Q_{\mathcal X}[3]
\]
and define the object
\[
\mathcal P:=\Cone(\var_F)[-1]
\]
as in Definition~\ref{def:P}.

By $t$--exactness of nearby and vanishing cycles for the perverse
$t$--structure \cite[Thm.~4.2.6]{DimcaSheaves}, both
\[
\phi_\pi(F),\quad \psi_\pi(F)
\]
are perverse sheaves on $X_0$.  Proposition~\ref{prop:var-mono} shows
that the variation morphism
\[
\var_F:\phi_\pi(F)\longrightarrow\psi_\pi(F)
\]
is a monomorphism in $\Perv(X_0)$.  Lemma~\ref{lem:cone-heart} therefore
implies that the shifted cone
\[
\mathcal P:=\Cone(\var_F)[-1]
\]
lies in the heart of the perverse $t$--structure.  This proves that
\[
\mathcal P\in\Perv(X_0)
\]
(Proposition~\ref{prop:P-perverse}). 

Let
\[
j:U:=X_0\setminus\{p\}\hookrightarrow X_0
\]
denote the inclusion of the smooth locus.  Since the vanishing cycle
complex $\phi_\pi(F)$ is supported at the singular point $p$, we have
\[
j^*\phi_\pi(F)=0.
\]
Applying $j^*$ to the defining triangle of $\mathcal P$ therefore gives
\[
j^*\mathcal P\cong j^*\psi_\pi(F).
\]
On the smooth locus the family is locally topologically trivial, so
nearby cycles coincide with the shifted constant sheaf
\[
j^*\psi_\pi(F)\cong \mathbb Q_U[3]
\]
\cite{DimcaSheaves}.  Thus
\[
j^*\mathcal P\cong\mathbb Q_U[3],
\]
showing that $\mathcal P$ agrees with the intersection complex on the
smooth locus (Proposition~\ref{prop:P-restriction}).

The Milnor fiber of an ordinary double point is homotopy
equivalent to the three--sphere
\cite{MilnorSingularPoints}.  Hence its reduced cohomology satisfies
\[
\widetilde H^3(F_p,\mathbb Q)\cong\mathbb Q,
\qquad
\widetilde H^k(F_p,\mathbb Q)=0\;(k\neq3).
\]
By the standard identification between vanishing-cycle stalk
cohomology and Milnor fiber cohomology \cite{DimcaSheaves}, this
produces a single rank--one middle contribution at the singular point. Equivalently,
\[
\dim_\mathbb Q\,{}^pH^0(i^*\mathcal P)=1.
\]
This establishes the structural properties of $\mathcal P$ stated in
Theorem~\ref{thm:main-perverse}.
\end{proof}


\subsection{Proof of Proposition~\ref{prop:main-decategorification}}

\begin{proof}[Proof of Proposition~\ref{prop:main-decategorification}]
Section~\ref{sec:intertwine-perv-model} explains how the perverse model
constructed in Section~\ref{sec:canonical-model} relates to the
categorical monodromy of the degeneration.

For a conifold degeneration with one ordinary double point, the
categorical monodromy of the smooth fiber is modeled by the spherical
twist associated with a spherical object
\[
S\in D^b(X_t)
\]
\cite{SeidelThomas}.  Passing to the Grothendieck group
\[
K_0\!\big(D^b(X_t)\big)
\]
yields the induced reflection
\[
x\longmapsto x+\chi(x,[S])\,[S],
\]
where $\chi$ denotes the Euler pairing
\cite{SeidelThomas,HuybrechtsFM}.

On the sheaf--theoretic side, the perverse object $\mathcal P$
constructed above isolates precisely the rank--one contribution of the
vanishing cycle through the short exact sequence
\[
0\to \IC_{X_0}\to \mathcal P\to i_*\mathbb Q_{\{p\}}\to0 .
\]
Thus the hypercohomology of $\mathcal P$ contains the corresponding
vanishing contribution in middle degree.

Both constructions therefore single out the same rank--one phenomenon
associated with the vanishing cycle: the spherical twist on the
categorical side and the extension class of $\mathcal P$ on the
perverse-sheaf side.  In this sense, $\mathcal P$ provides a natural
decategorified comparison object for the categorical monodromy of the
degeneration.
\end{proof}
\section{Multiple Nodes and General Singular Strata}
\label{sec:multi-node}

The construction developed in the previous sections focuses on the
local model of a degeneration with a single ordinary double point.
We briefly indicate how the same framework extends to degenerations
with several singular points.

Let
\[
\pi:\mathcal X\to\Delta
\]
be a degeneration whose central fiber contains a finite set of
ordinary double points
\[
\Sigma=\{p_1,\dots,p_r\}.
\]

Set
\[
U:=X_0\setminus\Sigma,
\qquad
j:U\hookrightarrow X_0,
\qquad
i:\Sigma\hookrightarrow X_0 .
\]

The nearby and vanishing cycle formalism applies exactly as in the
single--node case.  For an ordinary double point the vanishing cycle
contribution is rank one, and therefore the vanishing cycle complex
decomposes as
\[
\phi_\pi(\mathbb Q_{\mathcal X}[3])
\cong
\bigoplus_{i=1}^r i_{p_i*}\mathbb Q_{\{p_i\}}
\]
in $\Perv(X_0)$ \cite{DimcaSheaves}. The same cone construction used earlier therefore, produces a perverse object $\mathcal P$ on $X_0$ whose restriction to the smooth locus
satisfies
\[
j^*\mathcal P \cong \mathbb Q_U[3].
\]
Applying the recollement formalism for the open--closed decomposition
\[
X_0 = U \sqcup \Sigma
\]
\cite{BBD}, one obtains a short exact sequence
\[
0
\longrightarrow
\IC_{X_0}
\longrightarrow
\mathcal P
\longrightarrow
\bigoplus_{i=1}^r i_{p_i*}\mathbb Q_{\{p_i\}}
\longrightarrow
0 .
\]
Thus the perverse object differs from the intersection complex by the
direct sum of the point-supported vanishing contributions associated
with the nodes. In the simplest situation the vanishing cycles are independent and the resulting extension decomposes as a direct sum of the corresponding single-node constructions.  More generally, the interaction of the vanishing cycles is governed by their intersection pairing in
middle-dimensional homology, which controls the structure of the
associated Picard--Lefschetz transformations.

\medskip

A similar description applies when the singular locus contains strata
of positive dimension.  Let
\[
X_0 = U \sqcup S_1 \sqcup \cdots \sqcup S_k
\]
be a stratification with smooth open stratum $U$.  In this situation
the nearby cycle complex decomposes according to the stratification,
and the vanishing cycle contribution is encoded by a perverse sheaf
supported on the singular strata.

The perverse object constructed above then fits into an exact
sequence
\[
0
\longrightarrow
\IC_{X_0}
\longrightarrow
\mathcal P
\longrightarrow
\mathcal V
\longrightarrow
0 ,
\]
where $\mathcal V$ is a perverse sheaf supported on the singular
locus and determined by the corresponding vanishing cycle data. These observations indicate that the construction developed in this
paper extends naturally from isolated singular points to more general
stratified degenerations.  A detailed study of these cases lies beyond
the scope of the present work.

\section{Further categorical perspectives}

\subsection{Stability conditions and spherical monodromy}

In geometric situations related to mirror symmetry, derived
categories of Calabi--Yau varieties often carry Bridgeland
stability conditions whose wall-crossing behavior reflects
changes in the spectrum of stable objects
\cite{Bridgeland,DouglasCategoriesN1}. Spherical objects play a particularly important role in this context.  For a spherical object
\[
S\in D^b(X_t),
\]
the associated spherical twist
\[
T_S(E)
=
\Cone\!\left(
\RHom(S,E)\otimes S \longrightarrow E
\right)
\]
defines an autoequivalence of the derived category \cite{SeidelThomas}.

On the Grothendieck group this autoequivalence acts by the rank-one reflection
\[
x\longmapsto x+\chi(x,[S])\,[S],
\]
where $\chi$ denotes the Euler pairing \cite{SeidelThomas,HuybrechtsFM}. Under suitable realizations this reflection coincides with the
Picard--Lefschetz transformation associated with the vanishing
cycle of the degeneration.

The perverse sheaf $\mathcal P$ constructed in this paper isolates
precisely the corresponding rank-one contribution on the
sheaf-theoretic side through the short exact sequence
\[
0\to \IC_{X_0}\to \mathcal P\to i_*\mathbb Q_{\{p\}}\to0 .
\]
Thus $\mathcal P$ provides: (1) a natural sheaf-theoretic model for the
decategorified effect of the spherical monodromy associated with the
ordinary double point; and (2) the same rank–one phenomenon that appears in categorical wall crossing.

\subsection{Multi-node degenerations and quiver structures}

When the degeneration contains several nodes
\[
\Sigma=\{p_1,\dots,p_r\},
\]
the associated vanishing cycles
\[
\{\delta_1,\dots,\delta_r\}
\]
span a lattice equipped with the intersection pairing
\[
\langle\delta_i,\delta_j\rangle .
\]

This pairing determines the interaction of the corresponding
Picard--Lefschetz transformations.  In many geometric settings it
is convenient to encode these interactions using a quiver whose
vertices correspond to the vanishing cycles and whose arrows are
determined by their intersection numbers. From the perspective of the present paper, the generalized perverse object
\[
0 \to \IC_{X_0} \to \mathcal P \to
\bigoplus_{i=1}^r i_{p_i*}\mathbb Q \to 0
\]
isolates the same local contributions on the singular fiber. These observations suggest that the extension data appearing in the
multi-node case may be organized using quiver-type structures
compatible with the categorical monodromy generated by spherical
twists. A systematic development of this relationship will be explored in future work.

\section{Outlook: mixed Hodge modules and Hodge-enhanced structures}

The constructions developed in this paper are formulated at the level
of constructible sheaves and derived categories.  However, nearby and
vanishing cycle functors admit natural refinements in the framework of
Saito's theory of mixed Hodge modules \cite{SaitoMHM,SaitoDuality}. For degenerations of complex algebraic varieties, the nearby and
vanishing cycle complexes carry canonical mixed Hodge structures
compatible with monodromy and duality.  In the conifold case
considered here, the vanishing $3$--sphere contributes a
one-dimensional piece to the limiting mixed Hodge structure.

Since the perverse object $\mathcal P$ is defined functorially from
nearby and vanishing cycles, it is natural to expect that the
construction admits a refinement within the derived category of mixed
Hodge modules.  Such a refinement would incorporate additional
Hodge-theoretic information into the vanishing-cycle contribution
encoded by $\mathcal P$. More generally, it would be interesting to investigate whether the relationship between perverse sheaves, spherical monodromy, and categorical degenerations described here admits an enhancement
compatible with mixed Hodge structures.  The conifold degeneration
provides a particularly simple setting in which these structures can
be studied explicitly. A systematic treatment of these questions lies beyond the scope of the present paper and will be pursued in future work.



\printbibliography
\end{document}